\documentclass[11pt,A4]{article}
\usepackage{amsmath,amsthm,amsfonts,amssymb,amscd, amsxtra,mathrsfs, color}
\usepackage[english]{babel}

\usepackage[margin=1.0in]{geometry}
\newtheorem{theorem}{Theorem}
\newtheorem{lemma}[theorem]{Lemma}

\newtheorem{proposition}[theorem]{Proposition}
\newtheorem{remark}[theorem]{Remark}
\newtheorem{definition}[theorem]{Definition}
\newtheorem{example}[theorem]{Example}

\newtheorem{algorithm}{Algorithm}

\parskip 4pt
\parindent  4mm
\tolerance  3000

\begin{document}

\title{Newton method for finding a singularity of a special class of locally Lipschitz continuous vector fields on Riemannian manifolds}

\author{Fabiana R. de Oliveira \thanks{Universidade Federal de Goi\'as, Goi\^ania, GO 74001-970,  BR (e-mail:{\tt fabianardo@gmail.com}). The author was supported in part by CAPES.}
\and
Orizon P. Ferreira,\thanks{ Universidade Federal de Goi\'as, Goi\^ania, GO 74001-970, BR (e-mail:{\tt orizon@ufg.br}). The author was supported in part by CNPq Grants 305158/2014-7, 408151/2016-1 and FAPEG/GO.}
}
\maketitle
\begin{abstract}
In this paper, we extend some results of nonsmooth analysis from  Euclidean context to the Riemannian setting. In particular, we discuss the concept and some properties of  locally Lipschitz continuous vector fields  on  Riemannian settings, such as  Clarke generalized covariant derivative,   upper semicontinuity  and Rademacher theorem. We also present a version of Newton method for finding a singularity of a special class of  locally Lipschitz continuous vector fields. Under mild  conditions, we  establish     the well-definedness and local convergence of the sequence generated by the method in a neighborhood of a singularity.  In particular, a local convergence result for  semismooth vector fields is presented. Furthermore,  under Kantorovich-type assumptions  the convergence of the  sequence generated by the Newton method  to a solution is established, and its  uniqueness  in a suitable neighborhood of the starting point is verified.
\end{abstract}

\noindent
{\bf Keywords:} Riemannian manifold, locally Lipschitz continuous vector fields, Clarke generalized covariant derivative, semismooth vector field, regularity, Newton method.

\noindent
{\bf  AMS Subject Classification:} 90C30,  	49J52, 90C56.

\section{Introduction}\label{sec:int}
During the last decade, there has been an increasing the number of papers that have proposed extensions of concepts and techniques of nonsmooth analysis,  as well as  nonsmooth methods, from Euclidean context to the Riemannian setting, these paper  include   but not limited to \cite{Azagra2005, BacakEtall2016,  HosseiniEtall2017, GrohsHossein2016,  Hosseini2013,   Hosseini2017,    LedyaevYuZhu2007}. In these works, some concepts and classical results, such as generalized gradient or Clarke subdifferential, generalized directional derivative, chain rule and Lebourg mean value theorem have been generalized to Riemannian context. In addition, different methods have been proposed for solving nonsmooth optimization problems on Riemannian manifolds. For instance,  in  \cite{Hosseini2017} was  proposed the gradient sampling algorithm for finding a minimizer of nonsmooth locally Lipschitz functions,  in  \cite{GrohsHossein2016} was presented a Riemannian trust region method for unconstrained optimization problems, whose objective functions are locally Lipschitz.  One the main reason of the increasing interest to develop theoretical  and computational tools is the fact that nonsmooth optimization problems arise in a variety of applications, such as in computer vision, robotics, signal processing, see \cite{BacakEtall2016,  BergmannWeinmann2016}.  Although interest in nonsmooth analysis in the Riemannian setting  has increased,  there are few papers on nonsmooth  vector fields in this context,  see \cite{HosseiniEtall2017, Rampazzo2007}. However, as the nonsmooth vector fields can be viewed as a natural generalization of nonsmooth vector-valued maps, we believe that the development of this theory is great interest.

Our purpose in this paper is present a version of Newton method for finding a singularity of  locally Lipschitz continuous vector fields.  In order to present our method, we first define the Clarke generalized covariant derivative, which can be viewed as a natural generalization to Riemannian setting of Clarke generalized Jacobian, studied in \cite{Clarke1990}.  The concept of Clarke generalized covariant derivative has already appeared in  \cite{HosseiniEtall2017, Rampazzo2007}. In the present paper, we show its existence using a version of Rademacher theorem in the Riemannian setting, which  is one of our  contributions.

It is well-known that the Newton method is the method more popular for finding a singularity of a differentiable vector field, the origins of which go back to the work of  \cite{Shub1984}, see also  \cite{Gabay1982, ChongWang2006, Smith1994, Udricste1994, Wang2011}. This method became popular owing to its attractive convergence properties under suitable assumptions. For instance, in all previously cited works  the superlinear and/or quadratic local convergence of the sequence generated by the Newton method have been established under  invertibility of the covariant derivative of the vector field at its singularity and/or  Lipschitz-like conditions on the covariant derivative of the vector field.  Recently, in \cite{FernandesFerreiraYuan2017} were established local properties of  Newton method  under invertibility of the covariant derivative of the vector field at its singularity. Basically, in the Newton method the vector field is replaced by an approximation depending on the current iterate, and then the original problem is converted in an approximated problem which can be solved more easily. The solution of this approximated problem is then taken as a new iterate and the process is repeated. The success of Newton method for finding a singularity of a differentiable vector field motivates us to study Newton method for  finding a singularity of a locally Lipschitz continuous vector field. The essence of the our method is similar to classical case, however, in the approximated problem we combines the exponential mapping on the manifold with an element of Clarke generalized covariant derivative of the vector field. This is because, the derivative covariant of a locally Lipschitz continuous vector field may not exist. It is worth to pointed out that when the vector field is continuously differentiable our method reduces to the classical Newton method. From the theoretical viewpoint, we present local and semi-local convergence analysis of the proposed method under mild   assumptions.

The paper is organized as follows. In Section~\ref{sec:int.1}, some notation and basic results are presented. In Section~\ref{sec:na}, we generalize  some results of nonsmooth analysis to Riemannian context, in particular, we establish the Rademacher theorem, introduce the Clarke generalized covariant derivative associated to a locally Lipschitz continuous vector field  and study  its main properties. In Section~\ref{sec:nmca}, we describe the Newton method and establish its convergence theorems. In Section~\ref{sec:exam}, we present a class of examples of locally Lipschitz continuous vector field satisfying the assumptions of the convergence theorems. We conclude the paper with some remarks in Section~\ref{sec:fr}.
\section{Notation and Auxiliary Results} \label{sec:int.1}
In this section, we recall some notations, definitions and basic properties of Riemannian manifolds used throughout the paper. They can be found in many books on Riemannian Geometry, for example, in \cite{Lang1995, Sakai1996, Loring2011}.

A \textit{chart} on a $n$-dimensional  smooth manifold $M$ is a pair $(U, \varphi)$, where $U$ is an open subset of $M$ and the {\it coordinate map} $\varphi: U \to \widehat{U}$ is a  smooth homeomorphism from $U$ to an open subset $\widehat{U} = \varphi(U) \subseteq \mathbb{R}^n$. Let $N$  and $M$  be  manifolds of finite dimension and  $F: N \to M$  be a  continuous map.   We say that  $F$ is {\it smooth} at $p\in N$, if there exist smooth charts $(U, \varphi)$ containing $p$ and $(W, \psi)$ containing $F(p)$ such that $F(U)\subseteq W$ and the composite  mapping $\psi \circ F \circ \varphi^{-1}: \varphi(U) \to \psi (W)$ is smooth at $\varphi(p)$.  The definition of the  smoothness  of a map $F: N \to M$  at a point is independent of the choice of charts, see \cite[Proposition~6.7]{Loring2011}. A {\it diffeomorphism} of manifolds is a bijective  smooth  map $F: N \to M$ whose inverse $F^{-1}$ is also smooth. According to \cite[Proposition 6.10]{Loring2011} coordinate maps are diffeomorphisms and, in particular,  are continuously differentiable. Let $M$ be a Riemannian manifold with {\it Riemannian metric} denoted by  $\langle  \cdot,   \cdot \rangle$ and the corresponding {\it norm}  by $\|\cdot\|$.  The {\it length} of a piecewise smooth curve  $\gamma:[a,b]\rightarrow M$ joining $p$ to $q$ in $M$, i.e., $\gamma(a)= p$ and $\gamma(b)=q$ is denoted by  $\ell(\gamma)$.  The {\it Riemannian distance} between $p$ and $q$ is defined as
$
d(p,q) = \inf_{\gamma \in \Gamma_{p,q}} \ell(\gamma),
$
where $\Gamma_{p,q}$ is the set of all piecewise smooth curves in $M$ joining points $p$ and $q$. This distance induces the original topology on $M$, namely $(M, d)$ is a complete metric space and the bounded and closed subsets are compact. The {\it open and closed balls} of radius $r>0$ centred at $p$ are defined respectively by $B_{r}(p):=\left\{ q\in M :~ d(p,q)<r\right\}$ and $ B_{r}[p]:=\left\{ q\in M :~d(p,q)\leq r\right\}.$ Consider $M$ a $n$-dimensional smooth Riemannian manifold. Denote  the {\it tangent space} at point $p$  by $T_pM$,   the {\it tangent bundle}  by $TM := \bigcup_{p\in M}T_pM$  and a {\it vector field} by a mapping   $X: M \to TM$ such that $X(p) \in T_pM$.  Let $\gamma$ be a curve joining the points $p$ and $q$ in $M$ and let $\nabla$ be the Levi-Civita connection associated to $(M, \langle \cdot, \cdot \rangle)$. For each $t \in [a,b]$, $\nabla$ induces a linear isometry between the tangent spaces $T _{\gamma(a)} {M}$ and $T _{\gamma(t)} {M}$, relative to $\langle \cdot , \cdot \rangle$, defined by
$P_{\gamma,a,t}v = Y(t)$, where $Y$ is the unique vector field on $\gamma$ such that $\nabla_{\gamma'(t)}Y(t) = 0$ and $Y(a)= v$. This isometry is called {\it parallel transport} along the  segment $\gamma$ joining  $\gamma(a)$ to $\gamma(t)$. It can  be showed   that
$P_{\gamma,\,b_{1},\,b_{2}}\circ P_{\gamma,\,a,\,b_{1}}= P_{\gamma,\,a,\,b_{2}}$ and $P_{\gamma,\,b,\,a}=P^{-1}_{\gamma,\,a,\,b}.$
For simplify and convenience, whenever   there is no confusion we will  consider the notation $P_{\gamma,p,q}$  instead of $P_{\gamma,\,a,\,b}$,  where $\gamma$ is a segment joining $p$ to $q$  with $\gamma(a)=p$ and $\gamma(b)=q$.  We will use the short  notation    $P_{pq}$  instead of  $P_{\gamma,p,q}$ whenever  there exists  an  unique geodesic   segment joining $p$ to $q$. For any $n$-dimensional smooth manifold $M$; the tangent bundle $TM$ has a natural topology and smooth structure that make it into a $2n$-dimensional smooth manifold. With respect to this structure, the projection $\pi: TM \to M$ is smooth, see \cite[Proposition 3.18]{Lee2013}. The {\it standard Riemannian distance $d_{\mbox{\tiny $TM$}}$  on the tangent bundle} $TM$  can  be defined as follows: given $u, v \in TM$, then  $d_{\mbox{\tiny $TM$}}$ is defined by
\begin{equation} \label{eq:MBundle}
d_{\mbox{\tiny $TM$}}(u, v):=\inf \left\{ \sqrt{\ell^2(\gamma)  + \| P_{\gamma, \pi u, \pi v } u - v\|^2} :~ \gamma \in \Gamma_{\pi u,  \pi v} \right\},
\end{equation}
where $\Gamma_{\pi u,  \pi v}$ is the set of all piecewise smooth curves in $M$ joining the points $\pi u$ to $\pi v$, whose derivative is never zero, see \cite[Appendix, p. 240]{CanaryEpsteinMarden2006}. A vector field $Y$ along a smooth curve $\gamma$ in $M$ is said to be {\it parallel}  when $\nabla_{\gamma^{\prime}} Y=0$. If $\gamma^{\prime}$ itself is parallel, we say that $\gamma$ is a {\it geodesic}. The geodesic equation $\nabla_{\gamma'}\gamma' = 0$ is a second-order nonlinear ordinary differential equation, so the geodesic $\gamma$ is determined by its position $p$ and velocity $v$ at $p$. It is easy to check that $\left\| \gamma' \right\|$ is constant. The restriction of a geodesic to a  closed bounded interval is called a {\it geodesic segment}. A geodesic segment  joining $p$ to $q$ in $M$ is said to be {\it minimal} if its length is equal to $d(p,q)$ and, in this case, it will be denoted by $\gamma_{pq}$.   A Riemannian manifold is {\it complete} if its geodesics $\gamma(t)$ are defined for any value of $t\in \mathbb{R}$. The Hopf-Rinow theorem asserts that any pair of points in a complete Riemannian  manifold $M$ can be joined by a (not necessarily unique) minimal geodesic segment. From now on, {\it $M$ denotes  a $n$-dimensional smooth and complete Riemannian manifold}. Due to the completeness of the Riemannian manifold $M$, the {\it exponential map} at $p$, $\exp_{p}:T_{p}{M} \to {M} $ can be given by $\exp_{p}v = \gamma(1)$, where $\gamma$ is the geodesic defined by its position $p$ and velocity $v$ at $p$ and $\gamma(t) = \exp_p(tv)$ for any value of $t$. The inverse of the exponential map (if exists) is denote by $\exp^{-1}_{p}$. Let $p\in {M}$, the {\it injectivity radius} of ${M}$ at $p$ is defined by
$ r_{p}:=\sup\{ r>0:~{\exp_{p}}_{\lvert_{B_{r}(0_{p})}} \mbox{ is\, a\, diffeomorphism} \},$
where $0_{p}$ denotes the origin of $T_{p}{M}$ and $B_{r}(0_{p}):=\lbrace  v\in T_{p}{M}:~\| v-0_{p}\| <r\rbrace$. A neighborhood $\mathcal{W}$ of $p \in M$ is said to be {\it normal neighborhood} of $p$ if there exists   a neighborhood $\mathcal{U}$ of the origin in $T_pM$ such that $\exp_p: \mathcal{U} \to \mathcal{W}$ is a diffeomorphism. Furthermore, if $\mathcal{W}$ is a normal neighborhood of each of its points, then $\mathcal{W}$ is said to be {\it totally normal neighborhood}.
\begin{remark}\label{unicidadedageodesica}
For $\bar{p}\in M$, the above definition implies that if $0<\delta<r_{\bar{p}}$, then $\exp_{\bar{p}}B_{\delta}(0_{ \bar{p}}) = B_{\delta}( \bar{p})$ is a   totally normal neighborhood. Hence, for all $p, q\in B_{\delta}(\bar{p})$, there exists a unique geodesic segment $\gamma$ joining  $p$ to $q$, which is given by $\gamma_{p q}(t)=\exp_{p}(t \exp^{-1}_{p} {q})$, for all $t\in [0, 1]$ and $d(p,q) = \|\exp_p^{-1}q\|$.
\end{remark}
Next we present a quantity, which plays  an important role in the sequel,  it  was defined in \cite{Dedieu2003}.
\begin{definition} \label{def:kp}
 Let $p \in M$ and $r_{p}$ be the radius of injectivity of $M$ at $p$. Define the quantity
$$K_{p}:=\sup\left \{\dfrac{d(\exp_{q}u, \exp_{q}v)}{\left\| u-v\right\|} :~ q\in B_{r_{p}}(p), ~ u,\,v\in T_{q}{M}, ~u\neq v, ~\| v\| \leq r_{p},~
\| u-v\|\leq r_{p}\right\}.$$
\end{definition}
In the next remark, we examine that under suitable  conditions is possible to estimate the value of  $K_p$.
\begin{remark} This number $K_{p}$ measures how fast the geodesics spread apart in $M$. In particular, when $u = 0$ or more generally when $u$ and $v$ are on the same line through $0$, $d(\exp_{q}u, \exp_{q}v)=\| u-v\|$. Hence, $K_{p}\geq1$, for all $p\in M$. When $M$ has non-negative sectional curvature, the geodesics spread apart less than the rays \cite[Chapter 5]{doCarmo1992}, i.e., $d(\exp_{p}u, \exp_{p}v)\leq\|u-v\|$ and, in this case, $K_{p}=1$  for all $p\in M$.
\end{remark}
The {\it directional derivative} of   $X$ at $p$ in the direction $v\in T_{p}M$ is defined by
\begin{align}\label{derdir}
\nabla X(p, v) = \lim_{t\rightarrow 0^{+}}\dfrac{1}{t}\left[P_{\exp_{p}(tv) p}X(\exp_{p}(tv)) - X(p)\right] \in T_pM,
\end{align}
whenever  the limit exists, where $P_{\exp_{p}(tv) p}$  denotes the parallel transport along $\gamma(t)= \exp_{p}(tv)$. If this directional derivative exists for every $v$, then $X$ is said to be \textit{directionally differentiable} at $p$. Denote by $\mathcal{X}(M)$ the space of the  differentiable vector fields on $M$. For each $X\in \mathcal{X}(M)$, the {\it covariant derivative} of $X$ determined by the Levi-Civita connection $\nabla$ defines at each $p\in M$ a linear mapping $\nabla X(p):T_pM\to T_pM$ given by $\nabla X(p)v:=\nabla_{Y}X(p)$, where $Y$ is a vector field such that $Y(p)= v$. Furthermore,    $\nabla X(p, v)= \nabla X(p)v$, see \cite[Proposition 3, p. 234]{Spivak1979}. To state  the next result we need to define the norm of a linear mapping.
\begin{definition}\label{de:normmult}
Let $p \in M$, the  norm of a linear mapping $A: T_pM \to T_pM$  is defined by
$$\|A\|:=\sup \left\{ \|A v  \|:~  v \in T_pM, ~\|v\| = 1 \right\}.$$
\end{definition}
We end this section with   the well-known Banach Lemma. For a prove of it see \cite[Lemma~2.3.2]{Ortega1970}.
\begin{lemma} \label{le:Banach}
Let $A$, $B$ be a linear operators in $T_pM$. If $A$ is non-singular and $\| A^{-1}\|\|B- A\|<1$, then $B$ is non-singular and
$$\|B^{-1}\|\leq  \frac{\|A^{-1}\|}{1 -\|A^{-1}(B-A)\|}.$$
\end{lemma}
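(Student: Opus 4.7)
The plan is the classical Banach-Lemma argument, carried out verbatim in the finite-dimensional inner-product space $T_pM$: realize $B$ as a small perturbation of $A$, factor out $A$, and invert the remainder by a Neumann series. First I would set $C := A^{-1}(B-A)$, a linear operator on $T_pM$, and observe that by the hypothesis and the submultiplicativity of the operator norm,
$$\|C\| \;\leq\; \|A^{-1}\|\,\|B-A\| \;<\; 1.$$
A direct computation then gives the factorization $A^{-1}B = I + C$, where $I$ denotes the identity map on $T_pM$; equivalently, $B = A(I+C)$.

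The heart of the argument is the standard Neumann series: since $\|C\|<1$ and $T_pM$ is a complete normed space (being finite-dimensional), the series $\sum_{k=0}^{\infty}(-1)^k C^k$ converges absolutely in operator norm, and telescoping $(I+C)\sum_{k=0}^{N}(-1)^kC^k$ shows that its sum is a two-sided inverse of $I+C$. The triangle inequality yields
$$\bigl\|(I+C)^{-1}\bigr\| \;\leq\; \sum_{k=0}^{\infty}\|C\|^k \;=\; \frac{1}{1-\|C\|}.$$

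From $B = A(I+C)$ with both factors invertible, one concludes that $B$ is non-singular with $B^{-1} = (I+C)^{-1}A^{-1}$. Submultiplicativity together with the Neumann-series estimate then gives
$$\|B^{-1}\| \;\leq\; \bigl\|(I+C)^{-1}\bigr\|\,\|A^{-1}\| \;\leq\; \frac{\|A^{-1}\|}{1-\|A^{-1}(B-A)\|},$$
which is exactly the claimed bound. There is no genuine obstacle here; the only point requiring a bit of care is to keep the sharper denominator by working with $\|C\|=\|A^{-1}(B-A)\|$ rather than passing to the weaker estimate $\|A^{-1}\|\,\|B-A\|$, and to preserve the non-commutative order of the factors when writing down $B^{-1}$.
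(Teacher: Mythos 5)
Your proof is correct. The paper itself does not supply an argument for this lemma---it simply cites \cite[Lemma~2.3.2]{Ortega1970}---so there is no in-paper proof to compare against; your Neumann-series argument (set $C=A^{-1}(B-A)$, factor $B=A(I+C)$, invert $I+C$ by $\sum_{k\ge 0}(-1)^kC^k$) is precisely the classical proof found in that reference, and you correctly keep the sharper denominator $1-\|A^{-1}(B-A)\|$ rather than the coarser $1-\|A^{-1}\|\,\|B-A\|$.
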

\section{Nonsmooth Analysis} \label{sec:na}
The goal of this section is extend some basic results of nonsmooth analysis from linear context  to Riemannian setting.  In particular,  we study the basic  properties of the locally Lipschitz continuous vector fields in Riemannian setting, a  generalization  of Rademacher theorem and introduce the concept of Clarke  generalized covariant derivative to this  new context.    A comprehensive study of nonsmooth analysis in  linear context can be found in \cite{Clarke1990}.  We begin with the definition of  locally Lipschitz continuous vector fields, this  concept was introduced in \cite{CruzNetoLimaOliveira1998} for gradient vector fields and its extension to general vector fields can be found   in \cite[p. 241]{CanaryEpsteinMarden2006}.
\begin{definition} \label{def:ILC}
A  vector field $X$ on $M$   is said to be Lipschitz continuous on $\Omega\subset M$, if there exists  a constant $L > 0$ such that for  $p, q\in M$ and all $\gamma$ geodesic segment joining $p$ to $q$, there holds
$$ \left\| P_{\gamma,p,q}X(p) - X(q)\right\| \leq  L \,\ell(\gamma), \qquad \forall~ p, q \in \Omega.$$
Given  $p\in M$, if there exists  $\delta > 0$ such that   $X$ is   Lipschitz continuous on $B_{\delta}(p)$, then $X$ is said to be Lipschitz continuous  at $p$.  Moreover, if for all $p \in  M$, $X$ is Lipschitz continuous  at $p$, then $X$ is said to be locally  Lipschitz continuous on $M$.
\end{definition}

Let $d_{\mbox{\tiny $TM$}}$ be the Riemannian distance  on $TM$.  Let  us  define  the concept of {\it Lipschitz continuity of vector field  as a map between the metric spaces $(M, d)$ and $(TM, d_{\mbox{\tiny $TM$}})$}. The formal definition is:
\begin{definition} \label{def:LCMS}
A  vector field $X$ on $M$  is said to be metrically Lipschitz continuous  on $\Omega\subset M$,  if there exists a constant $L > 0$ such that
$$d_{\mbox{\tiny $TM$}}(X(p), X(q))\leq L\, d(p,q), \qquad \forall~ p, q \in \Omega.$$
Given  $p\in M$, if there exists  $\delta > 0$ such that   $X$ is metrically Lipschitz continuous  on $B_{\delta}(p)$, then $X$ is said to be metrically Lipschitz continuous  at $p$.  Moreover, if for all $p \in M$, $X$ is  metrically Lipschitz continuous  at $p$, then $X$ is said to be locally metric Lipschitz continuous on $M$.
\end{definition}

It is immediate from the last definition that all  metrically  Lipschitz continuous  vector fields   are continuous. In the next result, we present a relationship between two above definitions.

\begin{theorem}\label{teo:lcllm}
If $X$ is Lipschitz continuous  with constant $L>0$,  then $X$ is also metrically   Lipschitz  continuous   with constant $ \sqrt{1+L^2}$. As a consequence, if $X$ is locally Lipschitz continuous on $M$, then $X$ is also locally metric Lipschitz continuous on $M$.
\end{theorem}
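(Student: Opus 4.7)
The plan is to unpack the definition of $d_{\mbox{\tiny $TM$}}$ in \eqref{eq:MBundle} and bound it from above by evaluating the quantity inside the infimum on a carefully chosen curve, namely a minimal geodesic segment joining $p$ to $q$.

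First I would fix $p, q \in \Omega$ and dispose of the trivial case $p = q$, where both distances vanish. For $p \neq q$, completeness of $M$ together with the Hopf--Rinow theorem yields a minimal geodesic segment $\gamma_{pq}$ joining $p$ to $q$ with $\ell(\gamma_{pq}) = d(p,q)$. Since $\gamma_{pq}$ has constant nonzero speed (the speed being constant on geodesics and nonzero because $p \neq q$), the curve $\gamma_{pq}$ lies in the admissible class $\Gamma_{p,q}$ appearing in \eqref{eq:MBundle}. Thus the infimum in \eqref{eq:MBundle} is at most
\[
\sqrt{\ell^2(\gamma_{pq}) + \bigl\| P_{\gamma_{pq},p,q} X(p) - X(q)\bigr\|^2}.
\]

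Next I would invoke the Lipschitz hypothesis from Definition~\ref{def:ILC} applied to the geodesic segment $\gamma_{pq}$, which gives $\| P_{\gamma_{pq},p,q}X(p) - X(q) \| \le L\,\ell(\gamma_{pq}) = L\,d(p,q)$. Substituting this into the bound above produces
\[
d_{\mbox{\tiny $TM$}}(X(p), X(q)) \le \sqrt{d(p,q)^2 + L^2\, d(p,q)^2} = \sqrt{1+L^2}\, d(p,q),
\]
which is exactly metric Lipschitz continuity with the claimed constant.

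For the second assertion, if $X$ is locally Lipschitz continuous on $M$, then for every $p \in M$ there is a ball $B_{\delta}(p)$ on which $X$ is Lipschitz with some constant $L_p$. Applying the first part of the theorem on this ball shows $X$ is metrically Lipschitz on $B_{\delta}(p)$ with constant $\sqrt{1+L_p^2}$, hence $X$ is locally metric Lipschitz continuous on $M$. I do not anticipate a significant obstacle here; the only subtle point is checking that the minimal geodesic chosen is actually admissible in the infimum defining $d_{\mbox{\tiny $TM$}}$ (nowhere-vanishing derivative), but this is automatic when $p \neq q$ because geodesics have constant speed.
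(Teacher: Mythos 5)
Your proposal is correct and follows essentially the same route as the paper: pick a minimal geodesic $\gamma_{pq}$, bound the infimum in \eqref{eq:MBundle} by the value on that curve, apply Definition~\ref{def:ILC} to get $\|P_{\gamma_{pq},p,q}X(p)-X(q)\|\le L\,d(p,q)$, and combine. Your extra remark about why $\gamma_{pq}$ lies in the admissible class (nowhere-vanishing derivative, guaranteed by constant nonzero speed when $p\ne q$) is a small bit of care the paper leaves implicit, but it does not change the argument.
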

\begin{proof}
Since $M$ is a complete manifold,  $\pi X(p) = p$ and  $\pi X(q) = q$,  it follows from   \eqref{eq:MBundle} that
\begin{equation} \label{eq:MBAp1}
d_{\mbox{\tiny $TM$}}(X(p), X(q))\leq \sqrt{d^2(p, q)+ \| P_{\gamma, p, q} X(p) - X(q)\|^2},   \qquad \forall~p, q\in M,
\end{equation}
where $\gamma$ is the minimal geodesic segment joining $p$ to $q$.  Considering that   $X$  is Lipschitz continuous  with constant $L>0$ from Definition~\ref{def:ILC} we have  $\left\| P_{\gamma,p,q}X(p) - X(q)\right\| \leq  L \,d(p,q)$ for all $p, q\in M$.  Hence,  inequality \eqref{eq:MBAp1} becomes
$$
d_{\mbox{\tiny $TM$}}(X(p), X(q))\leq \sqrt{1+L^2}~ d(p,q),
$$
for all $p, q\in M.$ Consequently,  by using Definition~\ref{def:LCMS} we conclude that  $X$ is metrically  Lipschitz continuous with constant $\sqrt{1+L^2}$. Therefore, the  proof of the first part is complete. The proof of the second part is similar.
\end{proof}
In the next definition, we present the notion of sets of measure zero to manifolds \cite{Lee2013, Sakai1996}.
\begin{definition}
A subset $E \subseteq M$ has measure zero in $M$ if for every smooth chart $(U, \varphi)$ for $M$, the subset $\varphi(E\cap U) \subseteq \mathbb{R}^n$ has $n$-dimensional measure zero.
\end{definition}

Let  $X$ be a locally Lipschitz continuous vector field on $M$. Throughout of the paper, ${\cal D}_X$ is the set defined by
$
 {\cal D}_X:= \{p\in M :~ X ~\mbox{is differentiable at} ~ p\}.
$
Locally Lipschitz  continuous vector fields  are in general non-differentiable, however,  they  are almost everywhere differentiable with respect to the Riemannian measure (see the concept of  Riemannian measure in \cite[p. 61]{Sakai1996}), i.e., the set $M \backslash {\cal D}_X$ has measure zero. This result  follows from  Rademacher theorem. A version of this theorem for locally Lipschitz continuous vector fields is given below.
\begin{theorem}\label{teo:rademacher}
If $X$ is  a locally Lipschitz continuous vector field  on $M$,  then $X$ is almost everywhere differentiable  on $M$.
\end{theorem}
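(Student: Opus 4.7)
The plan is to localize via coordinate charts and invoke the classical Euclidean Rademacher theorem. Since $M$ is second countable, it admits a countable atlas of smooth charts $(U_\alpha,\varphi_\alpha)$, and by the definition of measure zero given above, a countable union of measure-zero sets is still measure zero. It therefore suffices to prove that for each chart $(U,\varphi)$ on which $X$ is (uniformly) Lipschitz continuous with respect to $d$, the set $U \setminus \mathcal{D}_X$ has Lebesgue measure zero after being pushed forward by $\varphi$.

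Fix such a chart and write $X = \sum_{i=1}^{n} X^i \,\partial/\partial x_i$ in the coordinate frame, and define $\widehat{X}:\varphi(U) \to \mathbb{R}^n$ by $\widehat{X}(x) = (X^1(\varphi^{-1}(x)),\dots,X^n(\varphi^{-1}(x)))$. The first technical step is to show that $\widehat{X}$ is locally Lipschitz in the Euclidean sense. On a relatively compact subset $V \Subset U$, the coordinate map and its inverse are bi-Lipschitz between $(V,d)$ and $(\varphi(V),|\cdot|)$, the coordinate frame $\{\partial/\partial x_i\}$ provides a smoothly varying basis whose Gram matrix has eigenvalues bounded above and away from zero, and parallel transport along the minimal geodesic between two nearby points in $V$ differs from the identification afforded by the coordinate frame by an $O(d(p,q))$ perturbation (controlled by the continuity of the Christoffel symbols). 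Combining these with the intrinsic Lipschitz bound $\|P_{\gamma,p,q}X(p)-X(q)\|\le L\,\ell(\gamma)$ of Definition~\ref{def:ILC} yields a Euclidean Lipschitz estimate $|\widehat{X}(x)-\widehat{X}(y)|\le C|x-y|$ on $\varphi(V)$.

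The classical Rademacher theorem then guarantees that $\widehat{X}$ is Euclidean-differentiable off a Lebesgue-null set $\widehat{E}\subseteq \varphi(V)$. The second technical step is to show that Euclidean differentiability of $\widehat{X}$ at $x=\varphi(p)$ implies that the limit in \eqref{derdir} exists for every $v\in T_pM$ and defines a linear map $\nabla X(p):T_pM\to T_pM$. For this, one uses a Taylor expansion along the geodesic $t\mapsto \exp_p(tv)$, compares its coordinate representation with the Euclidean line $t\mapsto x + t\,d\varphi_p(v)$ (which agree to second order in $t$ because geodesics satisfy $\ddot x^k + \Gamma^k_{ij}\dot x^i \dot x^j = 0$), and combines the Euclidean Fréchet expansion of $\widehat{X}$ with the smooth variation of the parallel-transport operator $P_{\exp_p(tv)\,p}$ acting on the coordinate frame. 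The error terms are $o(t)$ by the Lipschitz control established in the previous step, so $p\in\mathcal{D}_X$ whenever $\varphi(p)\notin \widehat{E}$. Pulling $\widehat{E}$ back and summing over the countable atlas completes the proof.

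The main obstacle will be the second technical step: carefully bookkeeping the difference between parallel transport and the coordinate-frame identification of tangent spaces, so that the Euclidean Fréchet derivative of $\widehat{X}$ translates cleanly into the limit \eqref{derdir}. The comparison estimate between $P_{\exp_p(tv)\,p}$ and the coordinate basis transport is $O(t^2)$-close at $t=0$ because the Christoffel symbols vanish only at the base point of normal coordinates, so working either in normal coordinates at each candidate point of differentiability, or tracking first-order terms explicitly via the Christoffel symbols, is the cleanest route. The first and third steps (pushing forward Lipschitz-ness and invoking Euclidean Rademacher plus the chart-invariance of null sets) are comparatively routine.
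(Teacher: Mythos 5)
Your proposal is correct and follows the same overall strategy as the paper: localize via a countable atlas, push $X$ forward to a locally Lipschitz Euclidean map, invoke the classical Rademacher theorem, and pull back the null set. The paper's execution is a little slicker at the chart stage: rather than working with the coordinate-frame components $\widehat{X}\colon\varphi(U)\to\mathbb{R}^n$ and estimating the discrepancy between parallel transport and the frame identification via Christoffel symbols, the paper views $X$ as a map $M\to TM$, takes a chart $(W,\psi)$ on the $2n$-dimensional manifold $TM$, and obtains Lipschitz continuity of the composite $\psi\circ X\circ\varphi^{-1}$ directly from Theorem~\ref{teo:lcllm} (intrinsic Lipschitz implies metric Lipschitz for the distance $d_{\mbox{\tiny $TM$}}$) together with the Mean Value Inequality applied to the smooth chart maps $\varphi^{-1}$ and $\psi$ on compacta; this packages your Christoffel-symbol bookkeeping into the definition of $d_{\mbox{\tiny $TM$}}$. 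Conversely, your treatment of the final step is more careful than the paper's: the paper simply declares that a.e.\ differentiability of $\psi\circ X\circ\varphi^{-1}$ yields a.e.\ differentiability of $X$, implicitly identifying chart-differentiability with the existence of the covariant derivative, whereas you explicitly sketch why Euclidean Fr\'echet differentiability of the coordinate representation forces the limit in \eqref{derdir} to exist (the Christoffel terms being smooth, hence harmless). Both routes are sound; the paper's is shorter because of the $TM$-metric shortcut, while yours makes the pull-back of differentiability explicit rather than tacit.
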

\begin{proof}
Since $M$ is a $n$-dimensional smooth manifold then $TM$ is $2n$-dimensional smooth manifold.  First note that Theorem~\ref{teo:lcllm} implies that  $X$ is continuous.  Let $(U, \varphi)$  and $(W, \psi)$ be smooth charts for $M$ and $TM$, respectively,  such that $X(U)\subseteq W$ and consider the composite  mapping $\psi \circ X \circ \varphi^{-1}: \varphi(U) \to \psi (W)$.   We proceed to prove that the mapping $\psi \circ X \circ \varphi^{-1}$ is locally Lipschitz continuous on $\varphi(U)$. According to \cite[Proposition~6.10]{Loring2011} we have that the coordinate mappings  $\varphi^{-1}: \varphi(U) \to U$  and $\psi: W \to \psi(W)$ are diffeomorphisms and, in particular, continuously differentiable. Take $z\in \varphi(U)$ and $\rho > 0$ such that $B_{\rho}[z] \subset \varphi(U)$.  Since $B_{\rho}[z]$ is compact and the derivative of $\varphi^{-1}$ is continuous in $B_{\rho}[z]$, from Mean Value Inequality, see \cite [Theorem~2.14]{Azagra2005}, there exists $L_1 > 0$ such that
$
d(\varphi^{-1}(x), \varphi^{-1}(y)) \leq  L_1 \, \hat{d}(x,y),
$
for all $x, y \in B_{\rho}[z]$, where $\hat{d}$ is the Euclidean distance in $\mathbb{R}^{n}$. On the other hand, Theorem~\ref{teo:lcllm} implies that   $X$ is locally metric Lipschitz continuous on $\varphi(U)$, then shrinking $\rho>0$ if necessary,  we conclude that there exists $L_2>0$ such that
$
d_{\mbox{\tiny $TM$}}\left( X \circ \varphi^{-1}(x),  X \circ \varphi^{-1}(y)\right)\leq  L_2\,  d\left(\varphi^{-1}(x), \varphi^{-1}(y)\right),
$
for all $x, y\in B_{\rho}[z].$ Since $X(\varphi^{-1}(B_{\rho}[z]))$ is compact and the derivative of $\psi$ is continuous in $X(\varphi^{-1}(B_{\rho}[z]))$ again using Mean Value Inequality, see \cite[Theorem~2.14]{Azagra2005}, there exists $L_3>0$ such that
$$
\tilde{d} (\psi \circ X \circ \varphi^{-1}(x) , \psi \circ X \circ \varphi^{-1}(y)) \leq L_3\,  d_{\mbox{\tiny $TM$}}( X \circ \varphi^{-1}(x),  X \circ \varphi^{-1}(y)), \qquad \forall~x, y\in B_{\rho}[z],
$$
where $\tilde{d}$ is the Euclidean distance in $\mathbb{R}^{2n}$. Combining the three last inequalities we obtain that
$$
\tilde{d} (\psi \circ X \circ \varphi^{-1}(x) , \psi \circ X \circ \varphi^{-1}(y)) \leq \widetilde{L}\, \hat{d}(x,y),  \qquad \forall~x, y\in B_{\rho}[z],
$$
where $\widetilde{L} = L_1 L_2 L_3 > 0$. Hence, $\psi \circ X \circ \varphi^{-1}$ is  locally Lipschitz continuous on $\varphi(U) \subseteq  \mathbb{R}^{n}$. Therefore, from  Rademacher theorem, see \cite[Theorem~2, p. 81]{Evans1992}, we have that $\psi \circ X \circ \varphi^{-1}$ is almost everywhere differentiable on $\varphi(U)$.  Since the   charts $(U, \varphi)$  and $(W, \psi)$ are arbitrary, we conclude that $X$ is almost everywhere differentiable on $M$.
\end{proof}
Now, we introduce the {\it Clarke generalized covariant derivative} of a locally Lipschitz continuous vector field and explore some of its properties. For a comprehensive study about Clarke  generalized Jacobian in linear space, see \cite{Clarke1990}.
\begin{definition}\label{def:general}
The Clarke generalized covariant derivative of a locally Lipschitz continuous vector field $X$ is a set-valued mapping $\partial X: M  \rightrightarrows TM$ defined as
\begin{equation*}\label{dergen}
\partial X(p) :=  \emph{co} \left\{ H\in {\mathcal{L}}(T_pM):~ \exists\, \{p_k\}\subset {\cal D}_X,~ \lim_{k\to +\infty}p_k =p,~ H = \lim_{k\rightarrow +\infty}P_{p_kp}\nabla X(p_k) \right\},
\end{equation*}
where $\emph{``co"}$ represents the convex hull and ${\mathcal{L}}(T_pM)$ denotes the vector space consisting of all linear operator from $T_pM$ to $T_pM$.
\end{definition}
From Definition \ref{def:general} and  \cite[Corollary 3.1]{FernandesFerreiraYuan2017}, it is clear that if $X$ is differentiable near $p$, and its covariant derivative is continuous at $p$, then $\partial X(p) = \{\nabla X(p)\}$. Otherwise, $\partial X(p)$ could contain other elements different from $\nabla X(p)$, even if $X$ is differentiable at $p$, see \cite[Example 2.2.3]{Clarke1990}. In the following proposition, we shall show important results of the Clarke generalized covariant derivative. In particular, that the set $\partial X(p)$ is nonempty for all $p\in M$, and that $\partial X$ is locally bounded and closed, which is a generalization of \cite[Proposition 2.6.2, items (a), (b) and (c)]{Clarke1990}. These results will be very useful throughout this paper. Similar results have already been extended to functions defined in $M$, see \cite[Theorem 2.9]{Hosseini2011}.
\begin{proposition}\label{propcov}
Let $X$ be locally Lipschitz continuous vector field on $M$. The following statements are valid for any $p\in M$:
\begin{itemize}
\item [(i)] $\partial X(p)$ is a nonempty, convex and compact subset of  ${\mathcal{L}}(T_pM)$;
\item [(ii)] the set-valued mapping $\partial X:  M \rightrightarrows TM$ is locally  bounded, i.e., for all $\delta >0$, there exists a
$L>0$ such that for all $q\in B_\delta(p)$ and $V \in \partial X(q)$,  there holds $\|V\| \leq L$;
\item [(iii)] the  mapping $\partial X$ is upper semicontinuous at $p$, i.e.,  for every scalar $\epsilon > 0$, there exists a $0<\delta < r_p$,  and such that for all $q\in B_\delta (p)$,
$$
P_{qp} \partial X(q)\subset \partial X(p) + B_{\epsilon}(0),
$$
where $B_{\epsilon}(0):=\{v\in T_pM :~ \|v\| < \epsilon\}$.
Consequently,  the  set-valued mapping $\partial X$ is closed at $p$, i.e., if $\lim_{k\to +\infty}p_k=p$, $V_k \in \partial X(p_k)$, for all $k=0, 1, \ldots$,  and $\lim_{k\to +\infty} P_{p_{k} p}V_k =V$, then $V \in \partial X(p)$.
\end{itemize}
\end{proposition}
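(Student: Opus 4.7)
The plan is to prove the three items in the order (ii), (i), (iii), since the local boundedness established in (ii) underpins the finite-dimensional compactness arguments used in the other two parts.

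For item (ii), I would first fix $p\in M$ and choose $\delta>0$ small enough that $X$ is Lipschitz on $B_{2\delta}(p)$ with some constant $L>0$, which is possible by Definition~\ref{def:ILC}. At every point $q'\in B_{2\delta}(p)\cap\mathcal{D}_X$, combining the limit definition of the covariant derivative in \eqref{derdir} with the Lipschitz inequality along short geodesics through $q'$ forces $\|\nabla X(q')v\|\le L\|v\|$ for every $v\in T_{q'}M$, hence $\|\nabla X(q')\|\le L$. Since parallel transport is an isometry, this bound survives the limits and convex combinations appearing in Definition~\ref{def:general}, giving $\|V\|\le L$ for every $q\in B_{\delta}(p)$ and every $V\in\partial X(q)$.

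For item (i), convexity is built into Definition~\ref{def:general}. Non-emptiness follows by invoking Theorem~\ref{teo:rademacher} to produce a sequence $p_k\to p$ with $p_k\in\mathcal{D}_X$ and then using (ii) to extract a convergent subsequence from the bounded sequence $P_{p_kp}\nabla X(p_k)$ inside the finite-dimensional space $\mathcal{L}(T_pM)$. For compactness, I would show that the set
\[
S_p:=\bigl\{H\in\mathcal{L}(T_pM):\exists\, p_k\to p,\ p_k\in\mathcal{D}_X,\ H=\lim_{k\to\infty}P_{p_kp}\nabla X(p_k)\bigr\}
\]
is closed, by a standard diagonal argument on nested sequences, and bounded, by (ii); in finite dimensions the convex hull of a compact set is compact by Carathéodory's theorem, so $\partial X(p)=\operatorname{co} S_p$ is compact.

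For item (iii), I would argue by contradiction: if upper semicontinuity failed at $p$ there would exist $\epsilon>0$, a sequence $q_k\to p$ inside the totally normal neighborhood $B_{r_p}(p)$, and $V_k\in\partial X(q_k)$ with $P_{q_kp}V_k\notin\partial X(p)+B_\epsilon(0)$. By (ii) the sequence $\{P_{q_kp}V_k\}$ is bounded, so after passing to a subsequence it converges to some $W$, and the task reduces to proving $W\in\partial X(p)$. Using Carathéodory in $\mathcal{L}(T_pM)$ I would write $V_k=\sum_i\lambda_k^i V_k^i$ with each $V_k^i=\lim_{j\to\infty}P_{p_k^{i,j}q_k}\nabla X(p_k^{i,j})$ for $p_k^{i,j}\in\mathcal{D}_X$ with $p_k^{i,j}\to q_k$, and then diagonally extract $r_k^i:=p_k^{i,j(k)}\to p$ so that $V_k=\sum_i\lambda_k^i P_{r_k^i q_k}\nabla X(r_k^i)+o(1)$. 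Passing to further subsequences with $\lambda_k^i\to\lambda^i$ and $P_{r_k^i p}\nabla X(r_k^i)\to W^i\in\partial X(p)$, the convexity of $\partial X(p)$ established in (i) yields $W=\sum_i\lambda^i W^i\in\partial X(p)$, contradicting the separation. The closedness assertion follows at once from upper semicontinuity applied to any sequence with $P_{p_kp}V_k\to V$.

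The hard part will be the transition in (iii) between the two natural transports appearing after the diagonal extraction: applying $P_{q_kp}$ produces the broken-path composition $P_{q_kp}\circ P_{r_k^i q_k}$, whereas the definition of $\partial X(p)$ requires the direct transport $P_{r_k^i p}$. Inside the totally normal neighborhood of $p$ these two isometries of $T_{r_k^i}M$ onto $T_pM$ differ by an operator whose norm tends to $0$ as $r_k^i$ and $q_k$ both approach $p$; I would justify this by continuity of parallel transport in its endpoints inside a normal neighborhood, or by a short estimate in normal coordinates showing that the broken geodesic $r_k^i\to q_k\to p$ converges uniformly to the direct minimal geodesic $r_k^i\to p$. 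This is the only step where geometric content beyond the finite-dimensional convex-analysis machinery is used.
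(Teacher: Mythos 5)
Your proof is correct and, for items (i) and (ii), follows essentially the same route as the paper's: a Lipschitz bound $\|\nabla X(q')\|\leq L$ for $q'\in\mathcal{D}_X$ near $p$ (obtained from the difference quotient \eqref{derdir}) combined with the isometry of parallel transport gives local boundedness; Rademacher plus boundedness gives nonemptiness; and closedness of $\partial_B X(p)$ via a diagonal argument, together with Carath\'eodory, gives compactness of the convex hull. The meaningful difference is in item (iii). The paper's proof passes \emph{directly} from the failure of $P_{q_kp}\partial X(q_k)\subset\partial X(p)+B_\epsilon(0)$ to a sequence $\{q_k\}\subset\mathcal{D}_X$ with $P_{q_kp}\nabla X(q_k)$ escaping, leaving implicit why the escaping element of $\partial X(q_k)$ (which a priori is only a convex combination of limits of transported covariant derivatives at nearby points of $\mathcal{D}_X$) may be replaced by an actual covariant derivative. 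Your Carath\'eodory decomposition of $V_k$ in $\mathcal{L}(T_{q_k}M)$ followed by diagonal extraction makes this reduction explicit, and you correctly isolate the geometric input needed to close it: the broken-path transport $P_{q_kp}\circ P_{r_k^i q_k}$ must be compared with the direct transport $P_{r_k^i p}$, and their difference --- the holonomy of the small geodesic triangle $p,\,q_k,\,r_k^i$ inside the normal neighborhood --- tends to zero by continuous dependence of the parallel-transport ODE on the underlying path. That is indeed the only step with real geometric content, and the paper's proof suppresses it. So your argument is not a different approach but a more careful execution of the same one; the extra care in (iii) is a genuine improvement. (Two small remarks: Carath\'eodory should be invoked in $\mathcal{L}(T_{q_k}M)$ rather than $\mathcal{L}(T_pM)$, a typo; and your reordering (ii)$\to$(i)$\to$(iii) is harmless since the paper anyway reuses the (i)-boundedness argument to prove (ii).)
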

\begin{proof}
To prove item $(i)$  we define the auxiliary set
\begin{equation*}
\partial_B X(p)  := \left\{ H\in {\mathcal{L}}(T_pM):~ \exists\, \{p_k\}\subset {\cal D}_X,~ \lim_{k\to +\infty}p_k =p,~ H = \lim_{k\rightarrow +\infty}P_{p_kp}\nabla X(p_k) \right\}.
\end{equation*}
Owing to the fact that $T_pM$ is finite dimensional and $\partial X(p)$ is the  convex hull in  ${\mathcal{L}}(T_pM)$ of the set $\partial_B X(p)$, thus $\partial X(p)$ must be  convex.  Our next goal is to prove that $\partial X(p)$ is compact.  Due to  the convex hull of a compact set be  compact, it is sufficient to prove that $\partial_B X(p)$ is bounded and closed. Our first task is to prove that  $\partial_B X(p)$ is bounded. For this, take $p\in {\cal D}_X$ and $v\in T_{p}M$. Since $\nabla X(p)v = \nabla X(p,v)$ using \eqref{derdir}, the fact that $X$ is locally Lipschitz continuous on $M$ and the definition of exponential mapping we obtain that
\begin{equation*}
\left\| \nabla X(p)v\right\| = \lim_{t\rightarrow 0^+}  \left\|\dfrac{1}{t} \left[P_{\exp_p(tv)p}X(\exp_p(tv)) - X(p)\right] \right\| \leq L \| v \|,
\end{equation*}
where $L>0$ is the Lipschitz constant of $X$ around $p$. Hence, from Definition~\ref{de:normmult} we conclude that    $\left\| \nabla X(p)\right \| \leq L$, which implies that  $\partial_B X(p)$ is bounded.  To prove that  $\partial_B X(p)$ is closed,   let  $\{H_\ell\}$ be a sequence in $\partial_B X(p)$ such that $\lim_{\ell\rightarrow +\infty} H_\ell = H$.  Since $\{H_\ell\} \subset\partial_B X(p)$, there is a sequence $\{p_{k, \ell}\}$ such that  $\lim_{k \rightarrow +\infty} p_{k, \ell} = p$, and
$
\lim_{k \rightarrow +\infty}P_{p_{k, \ell} p} \nabla X(p_{k, \ell}) = H_\ell,
$
for each fixed  $\ell$. Therefore, $\lim_{k\rightarrow +\infty} p_{k, k} = p$ and $\lim_{k\rightarrow +\infty} P_{p_{k, k}p}\nabla X(p_{k, k}) = H$, and then  $H \in \partial_B X(p)$. Consequently $\partial_B X(p)$ is compact set.   To prove that   $\partial X(p)$ is a nonempty set,  we first note  that  Theorem~\ref{teo:rademacher}  implies that $X$ is almost everywhere differentiable on $M$, i.e., the set $M \backslash {\cal D}_X$ has measure zero. According to \cite[Proposition 6.8]{Lee2013}, ${\cal D}_X$ is dense in $M$. Then,  for any fixed point $p\in M$ there exists $\{p_k\}\subset {\cal D}_X $ that converges to $p$. Since the covariant derivative of $X$ at points differentiable near $p$ are bounded in norm by the Lipschitz constant and the parallel transport is an isometry we conclude that $\{P_{p_kp} \nabla X(p_k)\}$ must have at least one accumulation point and thus $\partial X(p)$ is indeed a nonempty set. To prove item $(ii)$, take  $\delta >0$, $p\in M$ and  $L>0$  the Lipschitz constant of $X$ around $p$.  The same argument used to prove item $(i)$ shows that $\left\|\nabla X(\bar{q})\right\| \leq L$ for all $\bar{q}\in B_\delta(p)\cap {\cal D}_X$. Let  $q\in B_\delta(p) $ and  $V\in \partial X(q)$. Then, there exist $H_1, \ldots, H_m \in  \partial_B  X(q)$   and $ \alpha_1, \ldots,  \alpha_m\in [0,  1]$  such that $V = \sum_{\ell=1}^{m}\alpha_\ell H_\ell$ and   $\sum_{\ell=1}^{m} \alpha_\ell = 1$.  Since $H_1, \ldots, H_m \in  \partial_B  X(q)$  then, there exists $\{q_{k, \ell}\} \subset  B_\delta(p)\cap {\cal D}_X$ with $\lim_{k \rightarrow +\infty} q_{k, \ell} = q$ such that $V = \sum_{\ell=1}^{m}\alpha_\ell\lim_{k\rightarrow +\infty}P_{q_{k, \ell}q}\nabla X(q_{k, \ell})$.   Owing to $\{q_{k, \ell}\} \subset  B_\delta(p)\cap {\cal D}_X$  we have  $\left\|\nabla X(q_{k, \ell})  \right\| \leq L$. Therefore, using that the parallel transport is an isometry we conclude that
\begin{equation*}
\left\|V\right\| = \left\| \sum_{\ell=1}^{m} \alpha_\ell \lim_{k\rightarrow +\infty} P_{q_{k, \ell}q}\nabla X(q_{k, \ell}) \right\| \leq \sum_{\ell=1}^{m} \alpha_\ell  \lim_{k \rightarrow +\infty} \left\|P_{q_{k, \ell}q}\nabla X(q_{k, \ell})  \right\| \leq L,
\end{equation*}
which is the desired inequality.  To prove item $(iii)$, suppose by contradiction that, for a given $\epsilon > 0$ and all $0 <\delta < r_p$ there exists $q\in B_{\delta}(p)$ such that $P_{qp} \partial X(q)\not\subset \partial X(p) + B_{\epsilon}(0)$. Then, there exists a sequence $\{q_k\} \subset {\cal D}_X$ such that $\lim_{k \to +\infty} q_k=p$ and
$
P_{q_k p} \nabla X(q_k) \notin \partial X(p) + B_{\epsilon}(0).
$
On the other hand, the item  $(ii)$ implies that $\partial X$ is locally bounded. Since the parallel transport is an isometry we have $\{P_{q_kp}\nabla X(q_k)\}$ is  bounded. Thus, we can extract $\{P_{q_{k_\ell}p}\nabla X(q_{k_\ell})\}$ a convergent subsequence of $\{P_{q_kp}\nabla X(q_k)\}$, let us say that $\{P_{q_{k_\ell}p}\nabla X(q_{k_\ell})\}$ converges to some $H$. From  Definition \ref{def:general} we obtain that  $H \in \partial X(p)$, which is a contradiction. Therefore, $\partial X$ is upper semicontinuous at $p$. The last part of the  item $(iii)$  is an immediate consequence of the first part and the proof is complete.
\end{proof}
\section{The Newton Method} \label{sec:nmca}
In this section, we present the Newton method for finding a singularity of a vector field $X$ on $M$, i.e., to solve the following problem
\begin{equation} \label{eq:TheProblem}
\mbox {find}  \quad p\in M\quad  \mbox{such that} \quad X(p)=0,
\end{equation}
where  $X$ is  a locally Lipschitz  continuous vector field on $M$. We will  study the   local and semi-local properties of the sequence generated by the method.  In the following, we formally state the Newton method to solve the problem \eqref{eq:TheProblem}.\\

\hrule
\begin{algorithm}  \label{Alg:NNM}
{\bf Newton  method\\}
\hrule
\begin{description}
\item[\bf Step 0.] Let $p_0\in M$ be given, and set $k=0$.
\item[\bf Step 1.] If $X(p_k) = 0$, stop.
\item[\bf Step 2.] Choose a $V_k\in \partial X(p_k)$ and compute
\begin{equation} \label{eq:NM}
p_{k+1}=\exp_{p_{k}}(-V_{k}^{-1}X(p_{k})).
\end{equation}
\item[\bf Step 3.] Set $k\gets k+1$, and go to step~1.
\end{description}
\hrule
\end{algorithm}
This method is a natural extension to the Riemannian setting of Newton method introduced in \cite{Qi1993}. Note that for to guarantee the well-definedness of the method, there are two issue which deserve attention  in each iteration $k$. The Clarke generalized covariant derivative  $\partial X(p_k)$ must be nonempty, which has already been proven in  the item $(i)$ of Proposition~\ref{propcov} and  $V_k \in \partial X(p_k)$ must be non-singular. In the following section, we will study the well-definedness  and  convergence properties of Newton method.
\subsection{Local Convergence Analysis}
In this section, we present the local convergence  analysis of Algorithm~\ref{Alg:NNM}.  For this end, we assume that $p_*\in M$ is a solution of problem \eqref{eq:TheProblem}. First, we will show that under some assumptions, the sequence generated by   this algorithm starting from  a suitable neighborhood of $p_*$ is well-defined  and converges to $p_*$ with rate of order $1 + \mu$. We begin by introducing the concept of regularity.
\begin{definition}\label{def:reg}
We say that a vector field $X$  on $M$ is regular at $p \in M$ if all $V_p \in \partial X(p)$ are non-singular.  If $X$ is regular at every point of $\Omega \subseteq M$, we say that $X$ is regular on $\Omega$.
\end{definition}
In the  following,   we study the behavior of the Newton method for a special class of vector field in a neighborhood of a regular point.  For this,   assume that {\it  $X$ is  a locally Lipschitz  continuous vector field on $M$}.  Consider the   following condition:
\begin{itemize}
\item[{\bf A1.}]  Let  ${\bar p}\in M$, $0<\delta<  r_{\bar p}$,   $X$ be regular on $B_{\delta}({\bar p})$,  $\lambda_{\bar p}\geq \max\{\|{V_{\bar p}^{-1}}\|: ~ { V_{\bar p}}\in \partial X({\bar p})\}$  and   $\epsilon >0$ with $\epsilon \lambda_{\bar p}<1$. For all $p, q\in   B_{\delta}({\bar p})$ and $V_{p}\in \partial X(p)$ there hold
\begin{eqnarray}
                                                                                                \displaystyle \|V_{p}^{-1}\| &\leq&  \frac{\lambda_{\bar p}}{1 -\epsilon \lambda_{\bar p}},   \label{eq:fcA1}    \\
     \displaystyle \left\|X(q)-P_{pq}\left[X(p)+ V_{p}\exp^{-1}_{p} q\right]\right\|&\leq& \epsilon \,d(p, q)^{1+\mu}, \qquad 0\leq \mu \leq 1. \label{eq:scA1}
\end{eqnarray}
\end{itemize}

The above  assumption  guarantee, in particular, that $X$ is regular  in a neighborhood of ${\bar p}$ and consequently, the Newton iteration  mapping is well-defined. Let  $0< \delta< r_{\bar p}$ be given by above assumption and  $N_{X} \colon B_{\delta}({\bar p}) \rightrightarrows M$ be  the {\it Newton iteration  mapping} for $X$ defined by
$$
N_{X} (p):=\left\{  \exp_{p}(-V_p^{-1}X(p))~: ~ V_p\in \partial X(p) \right\}.
$$

Therefore, one can apply a single Newton iteration on any $p \in B_{\delta}({\bar p})$ to obtain $N_X(p)$, which may  be  not  included  in $B_{\delta}({\bar p})$. Thus, this is enough to guarantee the well-definedness of only one iteration. In the  next result, we establish that Newtonian iterations may be repeated indefinitely in a suitable neighbourhood of  ${\bar p}$.
\begin{lemma}\label{le:welldefined}
Suppose  that $p_* \in M$ is  a solution of problem~\eqref{eq:TheProblem}, $X$  satisfies {\bf A1}  with ${\bar p} = p_*$, $q = p_*$ and the constants  $\epsilon>0$, $0<\delta<  r_{p_*}$ and $0\leq \mu \leq 1$  satisfy  $\epsilon \lambda_{p_*}(1+ \delta^{\mu}K_{p_*}) <1$.  Then, there exists   ${\hat \delta}> 0$ such that $X$ is regular on $ B_{{\hat \delta}}(p_*)$ and
\begin{equation} \label{eq;csl}
d\left(\exp_{p}(-V_{p}^{-1}X(p)),p_{*}\right)\leq \frac{ \epsilon\lambda_{p_*}K_{p_*}}{1 - \epsilon \lambda_{p_*} }  d(p,p_{*})^{1+\mu},   \qquad \forall~p \in B_{\hat \delta}({p_*}),  \qquad \forall ~V_{p}\in \partial X(p).
\end{equation}
Consequently,  $N_{X}$ is well-defined on $B_{{\hat \delta}}(p_{*})$ and   $N_{X}(p) \subset   B_{{\hat \delta}}(p_{*})$   for all $p\in B_{\hat \delta}(p_{*})$.
\end{lemma}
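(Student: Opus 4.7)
The plan is first to choose $\hat\delta\in(0,\delta]$ small enough that $B_{\hat\delta}(p_*)$ lies inside the totally normal neighborhood of $p_*$ given by Remark~\ref{unicidadedageodesica}, and that the Newton update stays in $B_{\hat\delta}(p_*)$. Reading the target inequality \eqref{eq;csl} together with the hypothesis $\epsilon\lambda_{p_*}(1+\delta^{\mu}K_{p_*})<1$, the natural self-map condition
$$
\frac{\epsilon\lambda_{p_*}K_{p_*}}{1-\epsilon\lambda_{p_*}}\,\hat\delta^{\mu}\le 1
$$
is already satisfied by $\hat\delta=\delta$, so I would simply take $\hat\delta=\delta$ (shrinking slightly if strict inequality is desired). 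On this ball, regularity of $X$ and the estimate $\|V_p^{-1}\|\le \lambda_{p_*}/(1-\epsilon\lambda_{p_*})$ for every $V_p\in\partial X(p)$ are given directly by \textbf{A1}, and $\exp_p^{-1}p_*$ is well defined with $\|\exp_p^{-1}p_*\|=d(p,p_*)$ for every $p\in B_{\hat\delta}(p_*)$.

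Next I would use $X(p_*)=0$ to simplify \eqref{eq:scA1} with $q=p_*$: since $P_{pp_*}$ is an isometry, it reduces to
$$
\|X(p)+V_p\exp_p^{-1}p_*\|\le \epsilon\, d(p,p_*)^{1+\mu}.
$$
Applying $V_p^{-1}$ and invoking \eqref{eq:fcA1} then produces the tangent-space Newton error
$$
\bigl\|-V_p^{-1}X(p)-\exp_p^{-1}p_*\bigr\|=\bigl\|V_p^{-1}\bigl(X(p)+V_p\exp_p^{-1}p_*\bigr)\bigr\|\le \frac{\epsilon\lambda_{p_*}}{1-\epsilon\lambda_{p_*}}\,d(p,p_*)^{1+\mu}.
$$
This is the crux of the estimate; everything else amounts to transferring it back to $M$.

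The final step is to pass from this tangent estimate to a Riemannian distance via the spread constant $K_{p_*}$. Setting $u:=-V_p^{-1}X(p)$, $v:=\exp_p^{-1}p_*$, and $q:=p$, Definition~\ref{def:kp} applies provided $p\in B_{r_{p_*}}(p_*)$, $\|v\|\le r_{p_*}$, and $\|u-v\|\le r_{p_*}$. The first two hold because $\hat\delta\le\delta<r_{p_*}$; the third follows from the displayed bound above combined with $K_{p_*}\ge 1$ and the choice of $\hat\delta$. Since $\exp_p v=p_*$, Definition~\ref{def:kp} yields
$$
d\bigl(\exp_p(-V_p^{-1}X(p)),p_*\bigr)\le K_{p_*}\|u-v\|\le \frac{\epsilon\lambda_{p_*}K_{p_*}}{1-\epsilon\lambda_{p_*}}\,d(p,p_*)^{1+\mu},
$$
which is \eqref{eq;csl}. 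Well-definedness of $N_X$ on $B_{\hat\delta}(p_*)$ then follows from the non-singularity of every $V_p\in\partial X(p)$ and completeness of $M$, and the self-map property $N_X(p)\subset B_{\hat\delta}(p_*)$ is read off from \eqref{eq;csl} using the inequality $\frac{\epsilon\lambda_{p_*}K_{p_*}}{1-\epsilon\lambda_{p_*}}\hat\delta^{\mu}\le 1$ to bound $d(\exp_p(-V_p^{-1}X(p)),p_*)\le \hat\delta$.

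The main obstacle is the simultaneous bookkeeping on $\hat\delta$: it must lie inside the totally normal neighborhood of $p_*$, respect the domain of $K_{p_*}$, and make the self-map contraction close up. The single conceptually interesting moment is the invocation of $K_{p_*}$, which is exactly the device that converts the tangent-space Newton residual into an intrinsic Riemannian distance.
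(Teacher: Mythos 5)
Your proof is correct and follows essentially the same route as the paper: bound the tangent-space Newton residual via \textbf{A1} and $X(p_*)=0$, transfer to the manifold distance using the constant $K_{p_*}$ from Definition~\ref{def:kp}, and read off the self-map property from the resulting contraction. Your explicit observation that $\hat\delta=\delta$ already works (because $\epsilon\lambda_{p_*}(1+\delta^{\mu}K_{p_*})<1$ and $K_{p_*}\geq1$ force $\|u-v\|<\delta<r_{p_*}$, making Definition~\ref{def:kp} applicable on the whole ball) is a small clarity improvement over the paper, which merely asserts the existence of some $\hat\delta<\delta$ without exhibiting it.
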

\begin{proof}
Since $\epsilon>0$, $0<\delta<  r_{p_*}$ and $0\leq \mu \leq 1$  are constants   such that  $X$ satisfies {\bf A1},   $X(p_*) = 0$ and  the  parallel transport is an isometry,  we conclude that
\begin{equation}
\begin{split} \label{equa8}
\left\| V_p^{-1} X(p) + \exp_p^{-1}p_* \right\| &\leq \left\|V_p^{-1}\right\|  \left\|X({p_*})-P_{p{p_*}}\left[X(p)+ V_{p}\exp^{-1}_{p} {p_*}\right]\right\| \\
                                                                      &\leq \frac{ \epsilon\lambda_{p_*}}{1 - \epsilon \lambda_{p_*}}  d(p,p_{*})^{1+\mu},
\end{split}
\end{equation}
for all $p \in B_{\delta}(p_*)$ and $V_{p}\in \partial X(p)$.  Hence,   \eqref{equa8}  implies that  there exists  $0 < {\hat \delta}< \delta $ such that    $\|V_p^{-1} X(p) + \exp_p^{-1}p_* \| \leq r_{p_*}$ for all $p \in B_{\hat \delta}(p_*)$ and $V_{p}\in \partial X(p)$.   Thus, considering that $\|\exp_p^{-1}p_*\|=d(p, p_*)<  r_{p_*}$,   we can use Definition~\ref{def:kp} with $p=p_*$, $q=p$, $u=-V_{p}^{-1}X(p)$ and  $v= \exp_p^{-1}p_*$ to obtain that
$$
d\left(\exp_{p}(-V_{p}^{-1}X(p)), p_*\right) \leq K_{p_*}\left\|-V_p^{-1} X(p) - \exp_p^{-1}p_*\right\|,
$$
for all $p \in B_{  \hat \delta}(p_*)$ and $V_{p}\in \partial X(p)$.
Therefore,  the combination of the last inequality with    \eqref{equa8} yields   \eqref{eq;csl}.  Owing to  $0 < {\hat \delta}< \delta $ and   $X$ be regular on $ B_{\delta}(p_*)$, we conclude that  $N_{X}$ is well-defined on $B_{\hat \delta}(p_{*})$.  Moreover,  since $\epsilon\lambda_{p_*}(1+ \delta^{\mu}K_{p_*}) <1$ and $0< {\hat \delta}<\delta$  we have from \eqref{eq;csl} that  $d\left(\exp_{p}(-V_{p}^{-1}X(p)),p_{*}\right)< d(p,p_{*})$  for all $p \in B_{\hat \delta}({p_*})$ and  $V_{p}\in \partial X(p)$. Thus,  we obtain   that   $N_{X}(p) \subset   B_{\hat \delta}(p_{*})$   for all $p\in B_{\hat \delta}(p_{*})$, and the proof of the lemma is complete.
\end{proof}
Now, we are ready to establishes the main result of this section, its prove is a  straight application of Lemma~\ref{le:welldefined}.

\begin{theorem}\label{th:conv}
Suppose   that  $p_* \in M$ is  a solution of problem~\eqref{eq:TheProblem}, $X$  satisfies {\bf A1}  with ${\bar p} = p_*$, $q = p_*$ and the constants  $\epsilon>0$, $0<\delta<  r_{p_*}$ and $0\leq \mu \leq 1$  satisfy  $\epsilon \lambda_{p_*}(1+ \delta^{\mu}K_{p_*}) <1$. Then, there exists   $0 < {\hat \delta} < \delta$ such that   for each  $p_{0}\in B_{\hat \delta}(p_{*})\backslash \{{p_*}\}$,    $\{p_{k}\}$ in Algorithm~\ref{Alg:NNM} is well-defined, belongs to  $B_{\hat \delta}(p_{*})$ and converges to $p_{*}$ with order $1+\mu$ as follows
\begin{equation} \label{eq;csls}
d\left(p_{k+1},p_{*}\right)\leq \frac{ \epsilon\lambda_{p_*}K_{p_*}}{1 - \epsilon \lambda_{p_*}}  d(p_{k},p_{*})^{1+\mu},  \qquad k=0, 1, \ldots.
\end{equation}
\end{theorem}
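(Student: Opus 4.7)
The proof is essentially a packaging of Lemma~\ref{le:welldefined} into an induction on the iteration index, so the strategic work has already been done. My plan is to take $\hat\delta$ to be precisely the radius produced by Lemma~\ref{le:welldefined}, fix an arbitrary starting point $p_0\in B_{\hat\delta}(p_*)\setminus\{p_*\}$, and then argue inductively that for every $k\geq 0$: (a) $p_k\in B_{\hat\delta}(p_*)$, (b) any choice $V_k\in\partial X(p_k)$ is non-singular so that Step~2 of Algorithm~\ref{Alg:NNM} can be carried out, and (c) the one-step contraction estimate \eqref{eq;csl} holds at $p_k$.

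The base case $k=0$ is trivial. For the inductive step, assume $p_k\in B_{\hat\delta}(p_*)$. Regularity of $X$ on $B_{\hat\delta}(p_*)$ (granted by Lemma~\ref{le:welldefined}) together with item~(i) of Proposition~\ref{propcov} guarantees that $\partial X(p_k)$ is a nonempty set of invertible linear operators, so the iterate $p_{k+1}=\exp_{p_k}(-V_k^{-1}X(p_k))$ is well-defined and lies in $N_X(p_k)$. The invariance statement $N_X(p)\subset B_{\hat\delta}(p_*)$ from Lemma~\ref{le:welldefined} then yields $p_{k+1}\in B_{\hat\delta}(p_*)$, closing the induction. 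Applying \eqref{eq;csl} with $p=p_k$ immediately produces \eqref{eq;csls}.

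It remains to argue convergence $p_k\to p_*$. From the hypothesis $\epsilon\lambda_{p_*}(1+\delta^\mu K_{p_*})<1$ one reads off
\[
C:=\frac{\epsilon\lambda_{p_*}K_{p_*}}{1-\epsilon\lambda_{p_*}}<\frac{1}{\delta^\mu},
\]
and since $\hat\delta<\delta$ one has $C\,\hat\delta^\mu<1$. Setting $q:=C\,\hat\delta^\mu<1$, the estimate \eqref{eq;csls} together with $d(p_k,p_*)\leq \hat\delta$ yields $d(p_{k+1},p_*)\leq q\,d(p_k,p_*)$, so $d(p_k,p_*)\leq q^k d(p_0,p_*)\to 0$. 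Inequality \eqref{eq;csls} itself then identifies the order of convergence as $1+\mu$.

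The only substantive point to be careful about is the possibility that Algorithm~\ref{Alg:NNM} terminates early because $X(p_k)=0$ for some $p_k\neq p_*$ in $B_{\hat\delta}(p_*)$; I expect this to be the main (very minor) obstacle. It is handled by specializing \eqref{eq:scA1} to $p=p_k$, $q=p_*$: since $X(p_k)=X(p_*)=0$, one obtains $\|V_{p_k}\exp_{p_k}^{-1}p_*\|\leq \epsilon\,d(p_k,p_*)^{1+\mu}$, and multiplying by $\|V_{p_k}^{-1}\|$ using \eqref{eq:fcA1} gives $d(p_k,p_*)\leq \frac{\epsilon\lambda_{p_*}}{1-\epsilon\lambda_{p_*}}d(p_k,p_*)^{1+\mu}$, which together with $d(p_k,p_*)\leq\hat\delta<\delta$ and $\epsilon\lambda_{p_*}(1+\delta^\mu K_{p_*})<1$ forces $d(p_k,p_*)=0$. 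Thus the termination rule is compatible with the claimed convergence statement, and the sequence either reaches $p_*$ in finitely many steps or is infinite with the rate \eqref{eq;csls}.
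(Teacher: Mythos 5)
Your proof is correct and follows essentially the same route as the paper's: both rest entirely on Lemma~\ref{le:welldefined}, which supplies the radius $\hat\delta$, the regularity and invariance $N_X(p)\subset B_{\hat\delta}(p_*)$, and the one-step contraction \eqref{eq;csl}, and the theorem then follows by iterating. Your extra argument ruling out premature termination at a spurious zero $p_k\ne p_*$ is a welcome explicit touch (the paper leaves it implicit), but it does not alter the structure of the proof.
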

\begin{proof}
The definition of Newton mapping $N_{X}$  implies that the sequence  generated by Algorithm~\ref{Alg:NNM} is equivalently stated as
\begin{equation}\label{equ10}
p_{k+1} \in  N_X(p_k), \qquad k = 0, 1, \ldots.
\end{equation}
Thus, by using \eqref{equ10},   we can apply   Lemma~\ref{le:welldefined}  to conclude  that  there exists  $0 < {\hat \delta} < \delta$  such that if   $p_{0}\in B_{\hat \delta}(p_{*})\backslash \{{p_*}\}$,  then   $\{p_{k}\}$ in Algorithm~\ref{Alg:NNM} is well-defined,  belongs to  $B_{\hat \delta}(p_{*})$ and satisfies \eqref{eq;csls}.  Since $\{p_{k}\}$ belongs to  $B_{\hat \delta}(p_{*})$ and  $\epsilon \lambda_{p_*}(1+\delta^{\mu}K_{p_*}) <1$   we obtain from   \eqref{eq;csls} that
$$
d\left(p_{k+1},p_{*}\right) <   \frac{ \epsilon \lambda_{p_*}{\hat \delta}^{\mu} K_{p_*}}{1 - \epsilon \lambda_{p_*}}d(p_{k},p_{*})< d(p_{k},p_{*}),  \qquad k=0, 1, \ldots.
$$
Therefore, we  conclude that $\{p_k\}$ converges to $p_*$ with order $1+\mu$ as \eqref{eq;csls}.
\end{proof}
\begin{remark}
If $\mu = 0$ in Theorem~\ref{th:conv}, then   \eqref{eq;csls} holds for any $\epsilon > 0$ satisfying $\epsilon \lambda_{p_*}(1+ K_{p_*}) <1$, independently  of ${\hat \delta}$. Thus,    \eqref{eq;csls}  implies  that $\{p_k\}$ converges superlinearly to $p_*$.
\end{remark}
\subsubsection{Local Convergence for  Semismooth Vector Fields}
In this section, we present a local convergence theorem for the Newton  method  for  finding a singularity of  semismooth vector fields. Semismoothness in  Euclidean setting    was originally introduced by Mifflin \cite{Mifflin1977} for scalar-valued functions  and subsequently extended by Qi and Sun in \cite{Qi1993} for vector-valued functions. The extension of   semismoothness  to the Riemannian settings was presented in \cite{LagemanHelmekeManton},  and  it will plays an important role in this section.   As occur in Euclidean context,   semismooth vector fields  are in general nonsmooth.  However,   as we shall show the Newton method is still applicable and converges locally with superlinear rate to a regular solution. Before state  formally the concept of semismoothness in the Riemannian setting,  let us first to show that  locally Lipschitz continuous vector fields are regular near regular points. The statement  of the result is:
\begin{lemma}\label{le:NonSing}
Let $X$ be a locally Lipschitz  continuous vector field on $M$. Assume that $X$  is   regular  at ${p_*} \in M$ and let $\lambda_{p_*}\geq \max\{\|{V_{p_*}^{-1}}\|: ~ {V_{p_*}}\in \partial X({p_*})\}$. Then, for every    $\epsilon>0$  satisfying  $\epsilon \lambda_{p_*} <1$, there exists $0 < \delta < r_{p_*}$  such that $X$ is regular on $ B_{\delta}(p_*)$ and
\begin{equation} \label{eq:BanachLem}
\|V_p^{-1}\| \leq  \frac{\lambda_{p_*}}{1 -\epsilon \lambda_{p_*}}, \qquad  \forall~p\in   B_{\delta}({p_*}), \quad \forall ~V_p\in \partial X(p).
\end{equation}
\end{lemma}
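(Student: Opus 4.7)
The plan is a standard perturbation argument: I would compare each $V_p\in\partial X(p)$ (for $p$ near $p_*$) with the non-singular elements of $\partial X(p_*)$ by first transporting $V_p$ to an operator on $T_{p_*}M$, then apply the Banach Lemma. Since parallel transport is an isometry, the natural object to look at is the conjugated operator
$$\widetilde V_p \;:=\; P_{p\,p_*}\, V_p\, P_{p_*\,p} \;:\; T_{p_*}M \to T_{p_*}M,$$
which is invertible iff $V_p$ is, with the same operator norm of inverse. So establishing the bound on $\|\widetilde V_p^{-1}\|$ in $\mathcal{L}(T_{p_*}M)$ will immediately give \eqref{eq:BanachLem} and regularity of $X$ at $p$.

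Given $\epsilon>0$ with $\epsilon\lambda_{p_*}<1$, I would first invoke the upper semicontinuity of $\partial X$ at $p_*$ from Proposition~\ref{propcov}(iii), interpreted (as is natural) in the operator-norm topology on $\mathcal{L}(T_{p_*}M)$ via the conjugation above. This produces $0<\delta<r_{p_*}$ such that for every $p\in B_\delta(p_*)$ and every $V_p\in\partial X(p)$, there exists some $V_{p_*}\in\partial X(p_*)$ with $\|\widetilde V_p - V_{p_*}\|<\epsilon$. By the regularity hypothesis at $p_*$ and the choice of $\lambda_{p_*}$ (which is well-defined as a maximum because $\partial X(p_*)$ is compact by Proposition~\ref{propcov}(i)), this $V_{p_*}$ is non-singular with $\|V_{p_*}^{-1}\|\le\lambda_{p_*}$. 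Hence
$$\|V_{p_*}^{-1}(\widetilde V_p - V_{p_*})\| \;\le\; \lambda_{p_*}\,\|\widetilde V_p - V_{p_*}\| \;<\; \epsilon\lambda_{p_*} \;<\; 1.$$

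With this inequality in hand, I would apply Lemma~\ref{le:Banach} with $A=V_{p_*}$ and $B=\widetilde V_p$ to conclude that $\widetilde V_p$ is non-singular and
$$\|\widetilde V_p^{-1}\| \;\le\; \frac{\|V_{p_*}^{-1}\|}{1-\|V_{p_*}^{-1}(\widetilde V_p - V_{p_*})\|} \;\le\; \frac{\lambda_{p_*}}{1-\epsilon\lambda_{p_*}}.$$
Transporting back, $V_p = P_{p_*\,p}\,\widetilde V_p\, P_{p\,p_*}$ is non-singular with $\|V_p^{-1}\|=\|\widetilde V_p^{-1}\|$, giving the desired bound uniformly on $B_\delta(p_*)$. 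The only mildly delicate point is the correct bookkeeping of the parallel-transport conjugation so that the upper semicontinuity statement of Proposition~\ref{propcov}(iii) feeds directly into the hypotheses of the Banach Lemma; once that is set up, no genuine obstacle remains, since the rest is the classical Euclidean perturbation argument transplanted to $\mathcal{L}(T_{p_*}M)$.
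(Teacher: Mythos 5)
Your proposal is correct and follows essentially the same route as the paper: both proofs invoke the upper semicontinuity of $\partial X$ at $p_*$ (Proposition~\ref{propcov}(iii)) to place each transported $V_p$ within operator-norm distance $\epsilon$ of some non-singular $V_{p_*}\in\partial X(p_*)$, and then apply the Banach Lemma (Lemma~\ref{le:Banach}) with $\|V_{p_*}^{-1}\|\le\lambda_{p_*}$ to get the uniform bound \eqref{eq:BanachLem}. The only difference is cosmetic: you spell out explicitly that $P_{p\,p_*}V_p$ must be read as the conjugated operator $P_{p\,p_*}V_pP_{p_*\,p}\in\mathcal L(T_{p_*}M)$ and that conjugation by an isometry preserves the norm of the inverse, whereas the paper leaves this bookkeeping implicit behind the remark ``taking into account that the parallel transport is an isometry.''
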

\begin{proof}
Let $\epsilon>0$ such that $\epsilon \lambda_{p_*} <1$.  Since $X$ is  a  locally Lipschitz  continuous vector field, it follows from item~$(iii)$ of Proposition~\ref{propcov} that, there is  a $0<\delta < r_{p_*}$ such that for all $p\in B_\delta ({p_*})$,
$P_{p{p_*}} \partial X({p})\subset \partial X({p_*}) + \{V\in T_{p_*}M :~ \|V\| < \epsilon\}$, i.e.,
$$
\partial X(p) \subset \left\{V \in T_{p}M: ~ \|P_{p{p_*}}V-{ V_{p_*}}\|< \epsilon, ~  \mbox{for some}~   { V_{p_*}} \in \partial X({p_*})\right\},  \qquad  \forall~p\in B_\delta ({p_*}).
$$
This inclusion implies that,   for each $p\in B_\delta ({p_*})$ and  $V_{p}\in \partial X(p)$,   there exists ${ V_{p_*}} \in \partial X({p_*})$ non-singular such that  $\|V_{p_*}^{-1}\| \|P_{p{p_*}}V_{p}-{ V_{p_*}}\|<\epsilon \lambda_{p_*}< 1$. Thus,  taking into account that the parallel transport is an isometry,  it follows from Lemma~\ref{le:Banach}   that $V_{p}$ is non-singular and
$$
\|V_{p}^{-1}\| \leq \frac{\| V_{p_*}^{-1}\|}{1 - \|V_{p_*}^{-1}\|\|P_{p{p_*}}V_{p}-{ V_{p_*}}\|}.
$$
Therefore,  considering that $\|V_{p_*}^{-1}\|\leq \lambda_{p_*}$  and $\|P_{p{p_*}}V_{p}-{ V_{p_*}}\|< \epsilon$,  the inequality \eqref{eq:BanachLem} follows.
\end{proof}
Now, let us  present a class of vector fields  satisfying the assumption   {\bf A1}, namely  the  semismooth vector fields and $\mu$-order semismooth vector fields. There exist, in the Euclidean context, several equivalents definitions  of the concept of    semismoothness,  see \cite{Qi1993}, see also \cite[Definition 7.4.2, p.~677]{Facchinei2003}. In the present paper, we  will  extend to the Riemannian settings the concept of  semismoothness adopted  in  \cite[p. 411]{DontchevRockafellar2010Book}.
\begin{definition} \label{def:DefSS}
A vector field $X$ on $M$,  which is Lipschitz continuous at ${p_*}$ and  directionally differentiable at  $p\in B_{\delta}({p_*})$ for all direction at $T_{p}M$,   is said to be semismooth at ${p_*} \in M$  when  for every  $\epsilon>0$  there exists $0<\delta  < r_{p_*}$ such that
$$
\displaystyle  \left\|X({p_*})-P_{p{p_*}}\left[X(p)+ V_{p}\exp^{-1}_{p} {p_*}\right]\right\|\leq  \epsilon \,d(p, {p_*}), \qquad   \forall~p \in B_{\delta}({p_*}), \quad \forall~ V_{p}\in \partial X(p).
$$
The  vector filed $X$ is said to be $\mu$-order semismooth at ${p_*} \in M$, for $0< \mu \leq 1$, when  there exist  $\epsilon>0$   and  $0<\delta  < r_{p_*}$ such that
\begin{equation} \label{eq:MuOrder}
 \displaystyle  \left\|X({p_*})-P_{p{p_*}}\left[X(p)+ V_{p}\exp^{-1}_{p} {p_*}\right]\right\| \leq   \epsilon \,d(p, {p_*})^{1+\mu}, \qquad   \forall~p \in B_{\delta}({p_*}), \quad \forall~ V_{p}\in \partial X(p).
\end{equation}
\end{definition}
Next, we state and prove the local  convergence result  for the   Newton  method  for  finding a singularity of  semismooth and $\mu$-order semismooth  vector fields.
\begin{theorem}\label{th:mainss}
Let  $X$ be locally Lipschitz continuous vector filed on $M$ and $p_{*}\in M$ be  a solution of problem~\eqref{eq:TheProblem}. Assume that $X$ is semismooth and   regular at ${p_*}$.   Then, there exists a $\delta>0$  such that for each  $p_{0}\in B_{\delta}(p_{*})\backslash \{{p_*}\}$,   $\{p_k\}$  generated by Algorithm~\ref{Alg:NNM}, is well-defined, belongs to  $B_{\delta}(p_{*})$ and converges superlinearly to $p_{*}$. Additionally, if $X$ is $\mu$-order semismooth  at $p_*$, then the convergence of  $\{p_k\}$ to    $p_{*}$   is of order $1+\mu$.
\end{theorem}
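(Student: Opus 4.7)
The strategy is to reduce everything to Theorem~\ref{th:conv} by verifying that hypothesis {\bf A1} holds at $p_*$ (with $\bar p=p_*$ and $q=p_*$). The two ingredients needed are the operator-norm bound \eqref{eq:fcA1} on $V_p^{-1}$ and the Newton residual estimate \eqref{eq:scA1}; the first is supplied by regularity together with the Banach-type perturbation Lemma~\ref{le:NonSing}, while the second is essentially built into the definitions of semismoothness and $\mu$-order semismoothness. The treatment of the two cases differs only in how carefully the constants $\epsilon$ and $\delta$ must be coupled.

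First, I would apply Lemma~\ref{le:NonSing}: since $X$ is regular at $p_*$, for any chosen $\epsilon>0$ with $\epsilon\lambda_{p_*}<1$ there exists $\delta_1>0$ such that $X$ is regular on $B_{\delta_1}(p_*)$ and \eqref{eq:fcA1} holds. For the $\mu$-order semismooth case, Definition~\ref{def:DefSS} provides a fixed pair $(\epsilon,\delta_2)$ for which \eqref{eq:MuOrder} is valid; this is precisely the second inequality in {\bf A1} with the prescribed $\mu>0$. Shrinking the common radius if necessary so that $\delta\leq \min(\delta_1,\delta_2)$ and $\epsilon\lambda_{p_*}(1+\delta^{\mu}K_{p_*})<1$ (possible because $\delta^\mu\to 0$ as $\delta\to 0$ when $\mu>0$), all hypotheses of Theorem~\ref{th:conv} are met. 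The conclusion of that theorem then furnishes the required $\hat\delta$ and gives well-definedness of $\{p_k\}$, invariance of $B_{\hat\delta}(p_*)$, and convergence of order $1+\mu$.

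For the plain semismooth case, the definition is stronger in a different sense: it permits \emph{any} $\epsilon>0$ at the cost of shrinking the ball $B_{\delta_\epsilon}(p_*)$. With $\mu=0$, as the Remark after Theorem~\ref{th:conv} records, the constraint $\epsilon\lambda_{p_*}(1+\delta^\mu K_{p_*})<1$ becomes $\epsilon\lambda_{p_*}(1+K_{p_*})<1$, independent of $\delta$. Fixing any $\epsilon_0$ satisfying this inequality, together with the matching $\delta_{\epsilon_0}$, Theorem~\ref{th:conv} yields a $\hat\delta_0>0$ such that for every starting point in $B_{\hat\delta_0}(p_*)$ the sequence $\{p_k\}$ is well-defined, stays in $B_{\hat\delta_0}(p_*)$, and converges to $p_*$ with linear rate $\epsilon_0\lambda_{p_*}K_{p_*}/(1-\epsilon_0\lambda_{p_*})$.

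To upgrade this to \emph{superlinear} convergence, I would invoke \eqref{eq;csl} with a shrinking sequence of $\epsilon$'s. Given any $\epsilon>0$ with $\epsilon\lambda_{p_*}(1+K_{p_*})<1$, semismoothness provides $\delta_\epsilon>0$; since $p_k\to p_*$ there is an index $k_\epsilon$ with $p_k\in B_{\delta_\epsilon}(p_*)$ for all $k\geq k_\epsilon$, and the pointwise estimate \eqref{eq;csl} (applied at each such $p_k$) gives
\[
d(p_{k+1},p_*)\;\leq\;\frac{\epsilon\lambda_{p_*}K_{p_*}}{1-\epsilon\lambda_{p_*}}\,d(p_k,p_*),\qquad k\geq k_\epsilon.
\]
Letting $\epsilon\to 0^+$ shows that $\limsup_k d(p_{k+1},p_*)/d(p_k,p_*)=0$, which is the superlinear rate. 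The only delicate point of the whole argument is this final upgrade step: one must be sure that after the first application of Theorem~\ref{th:conv} (with $\epsilon_0$), the inequality \eqref{eq;csl} can be reinvoked with progressively smaller $\epsilon$'s on the tail of the already-constructed sequence. This goes through because \eqref{eq;csl} is pointwise: it depends on $p$ lying in the relevant ball and on the semismoothness estimate at that $p$, not on $p$ having been produced by any particular choice of constants.
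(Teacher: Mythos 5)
Your proof takes essentially the same route as the paper's: verify hypothesis {\bf A1} at $\bar p = p_*$ by combining Lemma~\ref{le:NonSing} with the (semi)smoothness estimate from Definition~\ref{def:DefSS}, then reduce to Theorem~\ref{th:conv}. In fact your handling of the superlinear upgrade is more careful than the paper's: the paper asserts that \eqref{eq;csls} ``holds for any $\epsilon$'' with $\epsilon\lambda_{p_*}(1+K_{p_*})<1$ and concludes directly, glossing over the fact that the radius $\hat\delta$ produced by Theorem~\ref{th:conv} shrinks with $\epsilon$, whereas your tail argument — applying the pointwise estimate \eqref{eq;csl} only for $k\ge k_\epsilon$ once the iterates have entered the smaller ball — makes the $\limsup_k d(p_{k+1},p_*)/d(p_k,p_*)=0$ step fully rigorous.
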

\begin{proof}
Owing to  $X$ be semismooth and  regular at ${p_*}\in M$,  we can take $\lambda_{p_*}\geq \max\{\|{V_{p_*}^{-1}}\|: ~ {V_{p_*}}\in \partial X({p_*})\}$.  Take  $\epsilon>0$  satisfying    $\epsilon \lambda_{p_*}(1+ K_{p_*}) <1$. Thus,   from Lemma~\ref{le:NonSing} and Definition~\ref{def:DefSS} we can take $\delta>0$ such that \eqref{eq:fcA1} and \eqref{eq:scA1} hold for $\mu = 0$.   Hence, assumption $\mathbf{A1}$ holds with ${\bar p}=p_*$ and $q = p_*$  for all $p\in B_\delta({p_*})$ and $\mu = 0$.   Therefore,  applying  Theorem~\ref{th:conv},  we obtain that there exists   $0 < {\hat \delta}< \delta$ such that  every sequence $\{p_k\}$  generated by Algorithm~\ref{Alg:NNM} with $p_0 \in  B_{\hat \delta}({p_*})\backslash \{{p_*}\}$  belongs to $B_{\hat \delta}({p_*})$ and  satisfies \eqref{eq;csls}. Hence,  we have
\begin{equation*} \label{eq:BoundNormSL}
\frac{d(p_{k+1},p_{*})}{d(p_{k},p_{*})}  \leq \frac{ \epsilon \lambda_{p_*}K_{p_*}}{1 - \epsilon \lambda_{p_*} } ,   \qquad k = 0, 1, \ldots.
\end{equation*}
Since the last equality holds for any  $\epsilon$ such that $0<\epsilon< 1/( \lambda_{p_*}(1+K_{p_*})) $,  we conclude that  $\{p_k\}$ converges superlinearly to ${p_*}$. The proof of the second part  is similar. Indeed,  for a given  $\epsilon >0$   with   $\epsilon \lambda_{p_*} <1$,  take $\delta  > 0$ satisfying  $\epsilon \lambda_{p_*}(1+ \delta^{\mu}K_{p_*}) <1$  and such that \eqref{eq:BanachLem} and   \eqref{eq:MuOrder} hold. Then,  we can apply   Theorem~\ref{th:conv} and  the proof follows.
\end{proof}
In the following  remark, in particular,  we show that with some adjustments Theorem~\ref{th:mainss}  reduces to some well-known results.
\begin{remark}
It is well-known that  Newton method and its variants are quite efficient for finding zero on nonlinear functions in Euclidean settings. This because they have excellent convergence rate in a neighborhood of a zero.   It was shown  in  \cite{Qi1993}  that for some  class of nonsmooth functions,   namely    for semismooth functions,  the convergence of  Newton method still  is guaranteed.  The above theorem, allows us conclude that the generalization of  Newton method from the linear context to Riemannian settings for finding singularities of semismooth vector fields  still preserves its main convergence properties. It is worth mentioning that if $X$  is  continuously differentiable,  then Theorem~\ref{th:mainss} reduces to   \cite[Theorem~3.1]{FernandesFerreiraYuan2017}.   If $M = \mathbb{R}^n$, then Theorem~\ref{th:mainss} reduces to first part of  \cite[Theorem~3.2]{Qi1993}, see also  \cite[Theorem~7.5.3, p. 693]{Facchinei2003}. Finally,  if $X$  is  continuously differentiable  and $M = \mathbb{R}^n$, then the theorem above reduces to the first part of  \cite[Proposition~1.4.1, p. 90]{Bertsekas2016}.
\end{remark}
\subsection{Semi-local Convergence Analysis}
In this  section, we state and prove  the Kantorovich-type theorem on Newton method. This theorem ensures that  the sequence generated by the method  converges towards a singularity of the vector field by using semi-local conditions. It  is worth noting that the theorem   does not require a priori existence of a singularity, proving instead the existence of the singularity and its uniqueness on some region. The statement of the theorem is:
\begin{theorem} \label{th:KantTh}
Let  $X$ be locally Lipschitz continuous vector filed on $M$ and    $p_0\in M$.   Suppose that   $X$  satisfies {\bf A1}  with ${\bar p} = p_0$,  $\mu=0$ and $\delta > {\bar \delta}$. Moreover,    $B_{\bar \delta}(p_0)\subset M$ is a totally normal neighborhood of $p_0$ and  the constants $\lambda_{p_0}>0$,  $\epsilon>0$ and $0<{\bar \delta}<  r_{p_0}$ are such that
\begin{equation}\label{eq:Ass1}
\epsilon \lambda_{p_0} <\frac{1}{2}, \qquad  \qquad  \frac{\lambda_{p_0}  }{1 -2\epsilon \lambda_{p_0}} \left\| X(p_0)\right\|\leq  {\bar \delta}.
\end{equation}
Then,   $\{p_{k}\}$ in Algorithm~\ref{Alg:NNM} is well-defined, belongs to $B_ {\bar \delta}(p_0)$ and converge towards the unique solution $p_*$ of problem \eqref{eq:TheProblem} in $B_ {\bar \delta}[p_0]$. Furthermore, the  following error estimate  holds
\begin{equation}\label{erro}
d(p_k, p_*) \leq \frac{\epsilon \lambda_{p_0}}{1-2\epsilon \lambda_{p_0}}\,d(p_k,p_{k-1}), \qquad k = 1,2, \ldots.
\end{equation}
\end{theorem}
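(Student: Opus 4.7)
The plan is a standard Kantorovich-type induction adapted to the Riemannian setting. Set $q := \epsilon\lambda_{p_0}/(1-\epsilon\lambda_{p_0})$; the first inequality in \eqref{eq:Ass1} is equivalent to $q<1$, and a direct calculation yields $q/(1-q)=\epsilon\lambda_{p_0}/(1-2\epsilon\lambda_{p_0})$, which is exactly the constant appearing in \eqref{erro}. Under \textbf{A1} with ${\bar p}=p_0$, every $V_p\in\partial X(p)$ at a point $p\in B_{\bar\delta}(p_0)\subset B_\delta(p_0)$ is non-singular with $\|V_p^{-1}\|\le\lambda_{p_0}/(1-\epsilon\lambda_{p_0})$, while at $p_0$ itself one has the sharper $\|V_0^{-1}\|\le\lambda_{p_0}$ directly from the definition of $\lambda_{p_0}$. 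The totally-normal hypothesis on $B_{\bar\delta}(p_0)$ ensures $d(p,q)=\|\exp_p^{-1}q\|$ for $p,q$ in the ball, and in particular that the Newton step $-V_k^{-1}X(p_k)$ is the genuine logarithm of $p_{k+1}$ at $p_k$ whenever $p_{k+1}$ remains in the ball.

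I would prove by induction that $p_k$ is well-defined, lies in $B_{\bar\delta}(p_0)$, and satisfies $d(p_k,p_{k+1})\le q^k\lambda_{p_0}\|X(p_0)\|$. The base case uses $d(p_0,p_1)\le\|V_0^{-1}X(p_0)\|\le\lambda_{p_0}\|X(p_0)\|\le\bar\delta$, which is the content of the second inequality in \eqref{eq:Ass1}. For the inductive step, the Newton update yields the identity $X(p_k)+V_k\exp_{p_k}^{-1}p_{k+1}=0$; substituting into \eqref{eq:scA1} with $\mu=0$, $p=p_k$, $q=p_{k+1}$ gives $\|X(p_{k+1})\|\le\epsilon\,d(p_k,p_{k+1})$, and multiplying by the bound on $\|V_{k+1}^{-1}\|$ produces the contraction $d(p_{k+1},p_{k+2})\le q\,d(p_k,p_{k+1})$. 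Telescoping then yields
\[d(p_0,p_{k+1})\le\sum_{j=0}^{k}q^j\lambda_{p_0}\|X(p_0)\|<\frac{\lambda_{p_0}\|X(p_0)\|}{1-2\epsilon\lambda_{p_0}}\le\bar\delta,\]
which keeps $p_{k+1}$ strictly inside $B_{\bar\delta}(p_0)$ and closes the induction.

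The geometric decay makes $\{p_k\}$ Cauchy in the complete manifold $M$, hence convergent to some $p_*\in B_{\bar\delta}[p_0]$; continuity of $X$ together with $\|X(p_k)\|\le\epsilon\,d(p_{k-1},p_k)\to 0$ then forces $X(p_*)=0$. The error bound \eqref{erro} follows from the tail estimate $d(p_k,p_{k+m})\le d(p_{k-1},p_k)\sum_{j=1}^{m}q^j$, sending $m\to\infty$ and using $q/(1-q)=\epsilon\lambda_{p_0}/(1-2\epsilon\lambda_{p_0})$. Uniqueness comes from applying \eqref{eq:scA1} at $p=p_*$, $q=p_{**}$ for any competing zero $p_{**}\in B_{\bar\delta}[p_0]$ and any $V_{p_*}\in\partial X(p_*)$: using $X(p_*)=X(p_{**})=0$ and the isometry of parallel transport collapses the inequality to $\|V_{p_*}\exp_{p_*}^{-1}p_{**}\|\le\epsilon\,d(p_*,p_{**})$, and premultiplying by $V_{p_*}^{-1}$ while invoking $\|\exp_{p_*}^{-1}p_{**}\|=d(p_*,p_{**})$ yields $d(p_*,p_{**})\le q\,d(p_*,p_{**})$, forcing $p_*=p_{**}$.

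The delicate point running through the whole argument is not any single inequality but the careful bookkeeping that at every iteration the Newton vector lies in the domain on which $\exp_{p_k}$ is a diffeomorphism into $B_{\bar\delta}(p_0)$—which is exactly what the second condition in \eqref{eq:Ass1} is designed to guarantee—so that the identifications $\exp_{p_k}^{-1}p_{k+1}=-V_k^{-1}X(p_k)$ and $d(p_k,p_{k+1})=\|V_k^{-1}X(p_k)\|$ are legitimate and the contraction propagates cleanly. Everything else—non-singularity, bounds on $\|V_p^{-1}\|$, continuity of $X$—comes directly from \textbf{A1} and the locally Lipschitz hypothesis already established earlier in the paper.
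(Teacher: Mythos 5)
Your proposal is correct and follows essentially the same route as the paper: an induction showing that iterates stay in $B_{\bar\delta}(p_0)$ and satisfy a geometric contraction driven by \eqref{eq:scA1} with $\mu=0$, a Cauchy argument for convergence to a zero, a contradiction via $d(p_*,p_{**})\le q\,d(p_*,p_{**})$ for uniqueness, and a tail-sum estimate for \eqref{erro}. The only cosmetic difference is that your base case invokes the sharper bound $\|V_0^{-1}\|\le\lambda_{p_0}$ at $p_0$ while the paper uniformly uses $\|V_0^{-1}\|\le\lambda_{p_0}/(1-\epsilon\lambda_{p_0})$; both are compatible with \eqref{eq:Ass1} and lead to the same conclusion.
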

\begin{proof}
Firstly, we will prove by induction that   the sequence $\{p_{k}\}$ in Algorithm~\ref{Alg:NNM} is well-defined,  belong to $B_{\bar \delta}(p_0)$ and  satisfies
\begin{equation}\label{eq:ikt}
d(p_{k+1}, p_k)  \leq   \left(\frac{\epsilon \lambda_{p_0}}{1-\epsilon \lambda_{p_0}}\right)^k {\bar \delta} \left(\frac{1 - 2\epsilon \lambda_{p_0}}{1-\epsilon \lambda_{p_0}}\right),    \qquad k = 0, 1, \ldots.
\end{equation}
Let $V_{0} \in \partial X(p_0)$.  Assumption {\bf A1}  implies that   $ V_{0}$ is non-singular and  $\| V_{0}^{-1}\| \leq \lambda_{p_0}/(1 - \epsilon \lambda_{p_0})$. Hence   using  \eqref{eq:NM} we obtain that the iterate $p_{1}$ is well-defined. Furthermore,  the definition of the exponential mapping and \eqref{eq:Ass1}  imply that
\begin{align*}\label{equ23}
d(p_1, p_0)  = d\left(\exp_{p_0}(-V_{0}^{-1}X(p_0)), p_0\right)\leq  \left\| -V_{0}^{-1} X(p_0) \right\| \leq \frac{\lambda_{p_0}}{1 - \epsilon \lambda_{p_0}} \left\| X(p_0)\right\| \leq  {\bar \delta}\left(\frac{1 - 2\epsilon \lambda_{p_0}}{1-\epsilon \lambda_{p_0}}\right)< {\bar \delta}.
\end{align*}
Therefore,  $p_{1}$ is well-defined,  belong to $B_{\bar \delta}(p_0)$ and   \eqref{eq:ikt} holds  for  $k=0$.   Assume by induction  that $p_{1}, \ldots,   p_{\ell-1}$ are well-defined,   belong to $B_{\bar \delta}(p_0)$ and  \eqref{eq:ikt} holds for $k=1, \dots, \ell-1$.      Since $p_{\ell-1} \in B_{\bar \delta}(p_0)$ it follows from  assumption {\bf A1}  that  $V_{\ell-1}$ is non-singular and, in particular, the iterate $p_{\ell}$ is well-defined.  Thus, using triangular inequality and  the  induction assumption we have
\begin{equation} \label{eq:cbx0}
d(p_{\ell},p_0) \leq  \sum_{j=1}^{\ell} d(p_{j}, p_{j-1}) \leq   {\bar \delta}\left(\frac{1 - 2\epsilon \lambda_{p_0}}{1-\epsilon \lambda_{p_0}}\right) \sum_{j=1}^{\ell} \left(\frac{\epsilon \lambda_{p_0}}{1-\epsilon \lambda_{p_0}}\right)^{j-1}<  {\bar \delta},
\end{equation}
Then,  $ p_{\ell} \in B_{\bar \delta}(p_0)$.  Since $p_{\ell} \in B_{\bar \delta}(p_0)$ it follows from  {\bf A1}  that  $V_{\ell}$ is non-singular and, in particular, the iterate $p_{\ell+1}$ is well-defined.  Moreover,   $\| V_{\ell}^{-1}\| \leq \lambda_{p_0}/(1 - \epsilon \lambda_{p_0}) $.   Thus,  using  \eqref{eq:NM} and    the definition of the exponential mapping  we have
\begin{equation} \label{eq:pkpkk}
d(p_{\ell+1}, p_{\ell})  = d\left(\exp_{p_{\ell}}(-V_{{\ell}}^{-1}X(p_{\ell})), p_{\ell}\right)\leq\left\| -V_{{\ell}}^{-1}X(p_{\ell})\right\|\leq  \frac{\lambda_{p_0}}{1 - \epsilon \lambda_{p_0}}\left\| X(p_{\ell})\right\|.
\end{equation}
On the other hand,  considering that $B_{\bar \delta}(p_0)$ is  a totally normal neighborhood  and $p_{{\ell}-1}, p_{\ell} \in B_{\bar \delta}(p_0)$,  we conclude after some algebraic manipulations that
$$
\left\| X(p_{\ell})\right\| \leq \left\|X(p_{\ell}) - P_{p_{{\ell}-1}p_{\ell}} \left[X(p_{{\ell}-1}) + V_{{\ell}-1} \exp_{p_{{\ell}-1}}^{-1}p_{\ell}\right] \right\|
+  \left\| X(p_{{\ell}-1}) + V_{{\ell}-1} \exp_{p_{{\ell}-1}}^{-1}p_{\ell} \right\|.
$$
Taking into account that   \eqref{eq:NM} implies  $X(p_{{\ell}-1}) + V_{{\ell}-1} \exp_{p_{{\ell}-1}}^{-1}p_{\ell}=0$ the last inequality becomes
$$
\left\| X(p_{\ell})\right\| \leq \left\|X(p_{\ell}) - P_{p_{{\ell}-1}p_{\ell}} \left[X(p_{{\ell}-1}) + V_{{\ell}-1} \exp_{p_{{\ell}-1}}^{-1}p_{\ell}\right] \right\|.
$$
Using  {\bf A1} with $q = p_{\ell}$, $p = p_{{\ell}-1}$ and $V_p = V_{{\ell}-1}$, it follows from the latter  inequality   that
$$
\| X(p_{\ell})\| \leq \epsilon d(p_{\ell}, p_{{\ell}-1}).
$$
Thus combining the last inequality  with  \eqref{eq:pkpkk} and  using the  induction assumption we conclude that
\begin{equation}\label{equ25}
d(p_{{\ell}+1}, p_{\ell}) \leq  \frac{\epsilon \lambda_{p_0}}{1-\epsilon \lambda_{p_0}} d(p_{\ell}, p_{{\ell}-1}) \leq  \left(\frac{\epsilon \lambda_{p_0}}{1-\epsilon \lambda_{p_0}}\right)^{\ell} {\bar \delta} \left(\frac{1 - 2\epsilon \lambda_{p_0}}{1-\epsilon \lambda_{p_0}}\right),
\end{equation}
and the induction proof is complete.  Hence, using \eqref{equ25} and the same argument used to prove \eqref{eq:cbx0} we  obtain  that $p_{{\ell}+1} \in B_{\bar \delta}(p_0)$. Therefore, the Newton iterates are well-defined,  belong to $B_{\bar \delta}(p_0)$ and satisfy \eqref{eq:ikt}. We proceed  to prove that  $\{p_{k}\}$ converges. Using the triangular inequality and \eqref{eq:ikt}, for any $k$ and $s\in \{0, 1, \ldots\}$ we have
$$
d(p_{k+s+1}, p_k) \leq \sum_{j=k}^{k+s} d(p_{j+1}, p_j) \leq  {\bar \delta} \left(\frac{1 - 2\epsilon \lambda_{p_0}}{1-\epsilon \lambda_{p_0}}\right) \sum_{j=k}^{k+s}\left(\frac{\epsilon \lambda_{p_0}}{1-\epsilon \lambda_{p_0}}\right)^j <  {\bar \delta}\left(\frac{\epsilon \lambda_{p_0}}{1-\epsilon \lambda_{p_0}}\right)^k.
$$
Thus, due to  $2\epsilon \lambda_{p_0} <1$ we conclude that $\{p_k\}$ is a Cauchy sequence. This implies that the sequence  $\{p_k\}$ converges to some $p_* \in B_{\bar \delta}[p_0]$.  Thus,  owing  to  $X$ be a locally Lipschitz continuous vector field,  item~$(ii)$ of Proposition~\ref{propcov} implies that  $\left\{V_k\right\}$ is bounded. Therefore, using that $X$ is continuous, \eqref{eq:NM},  some properties of norm and $\{p_k\}$ converges to $p_*$,  we have
\begin{equation*}
0 \,\leq \left\| X(p_*)\right\| = \lim_{k \rightarrow +\infty} \left\| X(p_k)\right\| = \lim_{k\rightarrow +\infty} \left\|-V_k \exp_{p_k}^{-1}p_{k+1}\right \| \leq \lim_{k \rightarrow +\infty}\left\| V_k \right\|  d(p_{k+1}, p_k) = 0,
\end{equation*}
consequently, $X(p_*) = 0$. Now, we are going to  prove the uniqueness of the solution in $B_{\bar \delta}[p_0]$. For this purpose, assume that $q\in B_{\bar \delta}[p_0]$ and $X(q) = 0$. Take $V_* \in \partial X(p_*)$, by assumption $X$ is  regular on  $B_{\delta}(p_0)$ and $B_{\bar \delta}[p_0]\subset B_{\delta}(p_0)$,  then  $V_*$ is non-singular. Since $X(p_*) = 0$ and $X(q) = 0$, using  {\bf A1} with $p = p_*$ and $V_p = V_*$, and  some manipulations we obtain that
\begin{multline*}
d(p_*,q)  = \left\| V_*^{-1} V_* \exp^{-1}_{p_*}q \right\| \leq \left\| V_*^{-1}\right\|\Big[\left\|X(q) - P_{p_*q}\left[X(p_*)  + V_*\exp_{p_*}^{-1}q\right]\right\|\Big] \leq \frac{\epsilon \lambda_{p_0}}{1-\epsilon \lambda_{p_0}} d(p_*,q).
\end{multline*}
Thus, since $\epsilon \lambda_{p_0}< 1/2$ we conclude that $d(p_*,q) = 0$, i.e., $q = p_*$. Therefore, $p_*$ is the unique solution of \eqref{eq:TheProblem} in $B_{\bar \delta}[p_0]$. It remains to show \eqref{erro}. First note that using the same argument to establishes  first inequality in \eqref{equ25}   we can also prove that   $d(p_{i+1}, p_i) \leq [\epsilon \lambda_{p_0}/(1-\epsilon \lambda_{p_0})]\,d(p_i, p_{i-1})$,  for all $i=1, 2, \ldots$ and then
$$
d(p_{k+j}, p_{k+j-1}) \leq  \left(\frac{\epsilon \lambda_{p_0}}{1-\epsilon \lambda_{p_0}}\right)^{j}d(p_k, p_{k-1}),\qquad j=1, 2, \ldots.
$$
Hence,  using the triangular inequality, the last inequality and  any $s\in \{0, 1, \ldots\}$   we conclude
$$
d( p_{k+s+1}, p_k) \leq \sum_{j=k}^{k+s}d(p_{j+1}, p_j)  \leq  d(p_k, p_{k-1}) \sum_{j = 1}^{s+1} \left(\frac{\epsilon \lambda_{p_0}}{1-\epsilon \lambda_{p_0}} \right)^j   < \dfrac{\epsilon \lambda_{p_0}}{1- 2\epsilon \lambda_{p_0}}\, d(p_k, p_{k-1}).
$$
Taking the limit as $s$ goes to $+\infty$, we obtain the inequality \eqref{erro}, and the proof  is complete.
\end{proof}

\section{Some Examples}\label{sec:exam}
In this section, we present a class of  examples of locally Lipschitz continuous vector fields on the sphere satisfying the assumption \textbf{A1}. For this purpose, we begin by presenting some basic definitions about the geometry of the sphere. For further details, see \cite{FerreiraIusem2013, FerreiraIusem2014} and references therein.

Let $\langle \cdot, \cdot\rangle$ be the {\it usual inner product on $\mathbb{R}^{n+1}$}, with corresponding {\it norm} denoted by $\| \cdot\|$. The {\it $n$-dimensional Euclidean sphere} and its {\it tangent hyperplane at a point $p$} are denoted, respectively, by
$$
\mathbb{S}^{n}:=\left\{ p=(p_1, \ldots, p_{n+1}) \in \mathbb{R}^{n+1}: ~ \|p\|=1\right\}, \qquad T_{p}\mathbb{S}^n:=\left\{v\in \mathbb{R}^{n+1}:~ \langle p, v \rangle=0 \right\}.
$$
Denotes by  $I$ the $(n+1)\times (n+1)$ identity  matrix. The  {\it projection onto the tangent hyperplane} $T_p\mathbb{S}^n$ is the linear mapping defined by $I-pp^T: \mathbb{R}^{n+1} \to T_p\mathbb{S}^n$, where $p^T$ denotes the transpose of the vector $p$. Let  $\Omega $ be an open set in $\mathbb{R}^{n+1}$ such that $\mathbb{S}^n \subset \Omega $, and   $Y: \Omega \to \mathbb{R}^{n+1}$   be any  semismooth mapping; several examples can be found in \cite{DontchevRockafellar2010Book,Facchinei2003, IzmailovSolodov2014}.   Then, we define  the vector field  $X:\mathbb{S}^n \to  \mathbb{R}^{n+1}$ as follows
$$
X(p):= (I-pp^T) Y(p).
$$
Note that $X(p)\in T_p\mathbb{S}^n$ for all $p\in \mathbb{S}^n$. The Clarke generalized covariant derivative of $X$  at $p$ is  given by
\begin{equation}\label{eq:subX}
\partial X(p):= \left(I-pp^T\right)\partial Y(p) -p^TY(p)I,
\end{equation}
where $\partial Y(p)$ is the Clarke generalized covariant derivative  of $Y$ at $p$. Therefore,    all $V_p \in \partial X(p)$ is  a linear mapping
$V_p:T_p\mathbb{S}^n \to T_p\mathbb{S}^n$ given  by $V_p:=\left(I-pp^T\right) {\tilde V}_p -p^TY(p)I$,  where    ${\tilde V}_p\in \partial Y(p)$.  Since $Y$ is a locally Lipschitz continuous mapping, from  Rademacher theorem, see \cite[Theorem~2, p.~81]{Evans1992}, we conclude that $Y$ is almost everywhere differentiable. As $I - pp^T$ is a differentiable mapping, we obtain that $X$ is almost everywhere differentiable. By using the fundamental theorem of calculus in Riemannian setting  (see \cite{FerreiraSvaiter2002}), the fact that $\partial Y(p)$ is locally bounded and  continuity of  $Y$, we can prove that $X$ is also locally Lipschitz continuous vector field. Assume that $X$ is   regular  at $\bar{p} \in \Omega$ and let $\lambda_{\bar{p}}\geq \max\{\|{V_{\bar{p}}^{-1}}\|: ~ {V_{\bar{p}}}\in \partial X(\bar{p})\}$. Then, from Lemma~\ref{le:NonSing} for every $\epsilon>0$ satisfying  $\epsilon \lambda_{\bar{p}} <1$, there exists $0 < \delta < \pi$ (where $\pi$ is the injectivity radius of $\mathbb{S}^n$)  such that $X$ is regular on $ B_{\delta}(\bar{p})$ and for all $p\in   B_{\delta}(\bar{p})$ and $V_p\in \partial X(p)$ the following holds
\begin{equation*}
\|V_p^{-1}\| \leq  \frac{\lambda_{\bar{p}}}{1 -\epsilon \lambda_{\bar{p}}}.
\end{equation*}
This implies that inequality~\eqref{eq:fcA1} holds. On the other hand, because $X$ is a composition of semismooth mappings, we conclude that $X$ is semismooth, see \cite[Proposition~1.74, p. 54]{IzmailovSolodov2014}. Hence, from Definition~\ref{def:DefSS} inequality~\eqref{eq:scA1} holds. Therefore, the  projected vector field $X$ satisfies the assumption \textbf{A1}.  In the following, we   present a concrete  example.

\begin{example}
Let  $Y: \mathbb{R}^{2} \to \mathbb{R}^{2}$ be a semismooth mapping defined by $Y(p) := Ap - |p| - b$  with matrix  $A=\mbox{diag}(4,3)$ and vector $b = (b_1, b_2) \in \mathbb{R}^{2}$,  where $\mbox{diag} (p_1, p_2)$  denotes a  $2\times 2$  diagonal matrix with $(i,i)$-th entry equal to $p_i$, $i = 1,2$. Take ${\bar p}=(0,1)\in \mathbb{S}^{2}$ and note that $Y(\bar{p}) = 0$ for $b = (0,2)$. Some calculus show that the Clarke generalized covariant derivative of $Y$ at $\bar{p}$ is given by  $ \partial Y({\bar p}) =\{\mbox{diag} (d, 2):~   d \in [3, 5] \}$.  Define  $X(p) := (I - pp^{T})Y(p)$  the  vector field on $\mathbb{S}^{2}$. Therefore,  using \eqref{eq:subX}, we conclude that $\partial X(\bar{p}) =\{ V_{\bar{p}}:= \mbox{diag}(d-2+b_2, -2+b_2):~ d \in [3, 5] \}$.  {\it Note that all $V_{\bar{p}}\in \partial X(\bar{p})$ are non-singular as a linear mapping $V_{\bar{p}}: T_{\bar{p}}\mathbb{S}^2 \to T_{\bar{p}}\mathbb{S}^2$, where the tangent hyperplane at $\bar{p}$ is given by  $T_{\bar{p}}\mathbb{S}^2:=\left\{v:=(v_1, 0)\in  \mathbb{R}^2 :~ v_1\in  \mathbb{R} \right\}$}. Hence, from Definition~\ref{def:reg}, we obtain that $X$ is regular at $\bar{p} = (0,1)$. Let $\lambda_{\bar{p}} \geq \max\{\|V_{\bar{p}}^{-1}\|: ~ V_{\bar{p}}\in \partial X(\bar{p})\}$. As $X$ is a locally Lipschitz continuous vector field,  using Lemma~\ref{le:NonSing} for every $\epsilon>0$ satisfying  $\epsilon \lambda_{\bar{p}} <1$, there exists $0 < \delta < \pi$  such that $X$ is regular on $ B_{\delta}(\bar{p})$ and for all $p\in   B_{\delta}(\bar{p})$ and ${V}_p \in \partial X(p)$ the following hold $\|{{V}_p}^{-1}\| \leq  \lambda_{\bar{p}}/(1 -\epsilon \lambda_{\bar{p}})$. Because $X$ is a semismooth vector field, we conclude that the assumption \textbf{A1} holds.
\end{example}

It is worth pointing out that in the literature there exist other examples of  semismooth vector field, see, for example, \cite{Hosseini2017}.

\section{Conclusions}\label{sec:fr}
In this paper, we studied the concept and some  properties of the  locally Lipschitz continuous vector fields. It is worth mentioning that  the Rademacher theorem is an essential tool  to  ensure   the existence of  Clarke generalized covariant derivative. Additionally, a version of Newton method for finding a singularity these vector fields was proposed. Under regularity and semismoothness the well-definedness  and local convergence of the method were established.  Furthermore,  a Kantorovich-type theorem was presented.  We expect that the results of this paper can aid in the extensions of new results and methods  of nonsmooth analysis to the Riemannian context, for example, the Mean Value Theorem as well as inexact and globalized versions of Newton method.

\section*{Acknowledgements}
 The work  was supported  by CAPES, FAPEG and CNPq Grants 408151/2016-1, 302473/2017-3.


\end{document}